\newtheorem{theo}{Theorem}
\newtheorem{prop}{Proposition}
\theoremstyle{definition}
\newtheorem{rem}{Remark}
\newcommand{\euler}[2]{\left\langle\begin{array}{c}
     \!\!#1\!\! \\
     \!\!#2\!\!
\end{array}\right\rangle}
\newcommand{\ind}{ 1\hspace{-.55ex}\mbox{l}}
\newcommand{\impl}{\zeta}
\newcommand{\N}{\ensuremath{\mathbb N}}
\newcommand{\ac}[1]{\left\{#1\right\}}
\newcommand{\pa}[1]{\left(#1\right)}
\newcommand{\abs}[1]{\left|#1\right|}
\newcommand{\pr}[1]{\mathbb P\left(#1\right)}
\begin{document}\title{\bf  Pascal's formulas and vector fields}

\author{Philippe Chassaing \thanks{ Institut Élie Cartan, Université de Lorraine Email: chassaingph@gmail.com}, Jules Flin \thanks{Institut Élie Cartan, Université de Lorraine Email: jules.flin8@etu.univ-lorraine.fr}, Alexis Zevio \thanks{Institut Élie Cartan, Université de Lorraine Email: alexis.zevio1@etu.univ-lorraine.fr }}

\maketitle
\begin{abstract} \noindent We consider four examples of combinatorial triangles $\pa{T(n,k)}_{0\le k\le n}$ (Pascal, Stirling of both types, Euler) : through saddle-point asymptotics, their  \emph{Pascal's formulas} define four vector fields, together with their field lines that turn out to be the conjectured limit of sample paths of four well known Markov chains. We  prove  this asymptotic behaviour in three of the four cases.
\\
\noindent\textbf{Keywords.} Markov chain, combinatorial triangle, Pascal formula, hydrodynamic limit, vector field.
\end{abstract}

\tableofcontents

\section{Introduction}

\subsection{Pascal's formulas} 
\label{sec:intro}
Set $S=\ac{(n,k)\in\N^2,\ 0\le k\le n}$, $\mathring{S}=\ac{(n,k)\in\N^2,\ 0< k< n}$, and let $S^{\star}=S\backslash\ac{\pa{0,0}}$.
Besides Pascal's triangle, other triangular arrays $\pa{T(n,k)}_{(n,k)\in S}$ of interest satisfy a recursion formula similar to Pascal's formula, i.e. of the following form, for $(n,k)\in S^{\star}$ :
\begin{align}
\label{Pascal}
T(n,k)&=a(n,k)T(n-1,k-1)+b(n,k)T(n-1,k),
\end{align}
with the convention that either $(n,k)\in S$ or $T(n,k)=0$. For instance, relation \eqref{Pascal} holds true for the following triangular arrays :
\begin{itemize}
\item for Pascal's triangle, if $(a,b)(n,k)=(1,1)$ ; 
\item for Stirling numbers of the second kind, if $(a,b)(n,k)=(1,k)$ ; 
\item for Stirling numbers of the first kind, if $(a,b)(n,k)=(1,n-1)$ ; 
\item for Euler's triangle, if $(a,b)(n,k)=(n-k,k+1)$. 
\end{itemize}
\subsection{Transition probabilities} 
\label{sec:mainr}
In view of \eqref{Pascal}, for $(n,k)\in S^{\star}$, consider
\begin{align}
\label{pt}
\pa{p_{0}(n,k),p_{1}(n,k)}&= \pa{\frac{b(n,k)T(n-1,k)}{T(n,k)}, \frac{a(n,k)T(n-1,k-1)}{T(n,k)}}
\end{align}
as some transition probabilities from $(n,k)$ to $(n-1,k)$, resp. to $(n-1,k-1)$. For each of these four triangular arrays, the transition probabilities $$\pa{p_\varepsilon(n,k)}_{(\varepsilon,(n,k))\in\{0,1\}\times S^{\star}},$$ together with the initial state $(m,\ell)$,  define a Markov chain $W=\pa{W_{k}}_{0\le k\le n}$ with terminal state  $(0,0)$. These four Markov chains are closely related to the simple random walk, the coupon collector problem, the chinese restaurant process and the one-dimensional internal DLA, respectively : they are the time-reversed versions of these processes, once these processes are conditioned to be at level $\ell$ at time $m$, as explained in the next section.

\subsection{Random walk, coupon collector, chinese restaurant,  and internal DLA} 
\label{examplesMC}
Consider a random process defined by $X_{0}=0$, and, for $n\geqslant 0$, $X_{n+1}= X_{n}+Y_{n+1},$ in which the $Y_{i}$'s are Bernoulli random variables. Set
\begin{align*}
W_{n}&=(m-n,X_{m-n})\in S,\quad 0\le n\le m, \\
w_{m}(t)&=m^{-1}\,X_{\lfloor mt\rfloor}\in S,\quad 0\le t\le 1,
\end{align*}
and note that, by definition, $W_{n}=(0,0)$ if and only if $n=m$. 

\subsubsection{Simple random walk}
Assume that  $(Y_{i})_{i\ge1}$ is a Bernoulli process, i.e. a sequence of i.i.d. Bernoulli random variables with parameter $p\in(0,1)$. Then
\begin{prop}
\label{kennedy}
The stochastic process $W=(W_{n})_{0\le n\le m}$, conditioned to $W_{0}=(m,\ell)$, or equivalently to $X_{m}=\ell$, is the Markov chain with transition probabilities $(p_{\varepsilon}(n,k))_{(\varepsilon,n,k)\in\ac{0,1}\times S^{\star}}$ related to Pascal's triangle. Its distribution does not depend on $p$.
\end{prop}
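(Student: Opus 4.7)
My plan is to exploit the classical observation that, conditionally on $X_m = \ell$, the Bernoulli path $(Y_1, \dots, Y_m)$ is uniformly distributed over the $\binom{m}{\ell}$ binary words of length $m$ with exactly $\ell$ ones. A one-line computation gives this: each admissible word has joint probability $p^\ell(1-p)^{m-\ell}$, while $\Prob{X_m = \ell} = \binom{m}{\ell}\, p^\ell(1-p)^{m-\ell}$, so the ratio is $1/\binom{m}{\ell}$. Since the resulting uniform law contains no $p$, the claim that the conditional distribution of $W$ does not depend on $p$ is settled at this stage.

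I would then identify the transition probabilities as follows. On the event $\ac{X_m = \ell}$, the filtration $\mathcal{F}_n = \sigma(W_0, \dots, W_n)$ coincides with $\sigma(Y_{m-n+1}, \dots, Y_m)$, because $X_{m-n} = \ell - (Y_{m-n+1} + \cdots + Y_m)$ is then $\mathcal{F}_n$-measurable. Given $\mathcal{F}_n$ with $X_{m-n} = k$, the same uniformity argument applied to the first $m-n$ Bernoulli variables shows that $(Y_1, \dots, Y_{m-n})$ is uniform over binary words of length $m-n$ summing to $k$, so that
\[ \Prob{Y_{m-n} = 1 \mid \mathcal{F}_n} = \frac{k}{m-n}, \qquad \Prob{Y_{m-n} = 0 \mid \mathcal{F}_n} = \frac{m-n-k}{m-n}. \]
Both expressions depend on $\mathcal{F}_n$ only through $W_n = (m-n,k)$, which yields the Markov property for $W$, and they match $p_1(m-n, k) = \binom{m-n-1}{k-1}/\binom{m-n}{k}$ and $p_0(m-n, k) = \binom{m-n-1}{k}/\binom{m-n}{k}$ coming from Pascal's triangle $T(n,k) = \binom{n}{k}$.

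No serious obstacle arises; the argument is essentially bookkeeping around the ``past of $W$ equals future of $X$'' change of viewpoint. The only point requiring a bit of care is the decoupling, under the conditioning, of the future-of-$X$ block $(Y_{m-n+1}, \dots, Y_m)$ and the past-of-$X$ block $(Y_1, \dots, Y_{m-n})$ into two independent uniform random words on their respective index sets of sizes $\ell - k$ and $k$. It is precisely this decoupling that justifies replacing the full $\mathcal{F}_n$-conditioning by the weaker event $\ac{X_{m-n} = k}$ when computing the transition probabilities above.
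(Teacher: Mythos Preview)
Your proof is correct. The argument via uniformity of the conditioned Bernoulli path---that each binary word of length $m$ with $\ell$ ones has conditional probability $1/\binom{m}{\ell}$, hence the decoupling into two independent uniform blocks once $X_{m-n}=k$ is fixed---is clean and complete, and the identification $p_1(m-n,k)=\binom{m-n-1}{k-1}/\binom{m-n}{k}=k/(m-n)$ is right.

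The paper takes a different, more structural route: it first proves a general time-reversal lemma for inhomogeneous Markov chains (Proposition~\ref{timereversal}), giving the reversed kernel $P_{(n,i),(n-1,j)}=h_{n-1}(j)Q_{n-1,j,i}/h_n(i)$, and then simply plugs in $h_n(k)=\binom{n}{k}p^k(1-p)^{n-k}$ and $Q_{n-1,i-1,i}=p$ to see the $p$'s cancel and recover $p_1(n,i)=\binom{n-1}{i-1}/\binom{n}{i}$. Your approach is more elementary and self-contained, exploiting exchangeability (a feature specific to the i.i.d.\ Bernoulli case); the paper's approach is less direct but sets up a single framework that it reuses verbatim for the coupon collector, Chinese restaurant, and iDLA propositions, where no such uniformity shortcut is available.
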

This result goes back at least to Kennedy \cite{zbMATH03504238}, or even to the introduction of the concept of sufficiency by Fisher around 1920 \cite{zbMATH03448510}. We recall its proof at Subsection \ref{kennedytype}. In the next cases, the Bernoulli random variables $Y_{i}$ are not i.i.d. .

\subsubsection{Coupon collector's problem}
Consider the coupon collector's problem with $N$ different items. Let $X_{n}$ denote the number of different items in the collection after the $n$th step. Again :
\begin{prop}
\label{coupon}
The stochastic process $W=(W_{n})_{0\le n\le m}$, conditioned on $W_{0}=(m,\ell)$, or equivalently on the number of different items in the collection after the $m$th step, $X_{m}$, to be equal to $\ell$, 
is the Markov chain with transition probabilities $(p_{\varepsilon}(n,k))_{\varepsilon,n,k}$ related to Stirling numbers of the second kind. Its distribution does not depend on $N$.
\end{prop}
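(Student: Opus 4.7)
The plan is to mirror the strategy recalled for Proposition \ref{kennedy}: explicit computation of the one-dimensional marginals of $X_n$, then a Bayes/Markov argument to extract the backward transition probabilities, and finally matching the answer with the ratios prescribed by \eqref{pt} for $(a,b)(n,k)=(1,k)$.

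First I would derive the marginal law of $X_n$. Since the number of surjections from $[n]$ onto a fixed $k$-subset of $[N]$ equals $k!\,S(n,k)$, where $S(n,k)$ denotes the Stirling number of the second kind, choosing which $k$ items appear gives
\[
\Prob{X_n=k}=\binom{N}{k}\frac{k!\,S(n,k)}{N^n}=\frac{N!}{(N-k)!\,N^n}\,S(n,k).
\]
I would also record the forward one-step transitions of the coupon collector: from level $k$ one stays at $k$ with probability $k/N$ and moves to $k+1$ with probability $(N-k)/N$.

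Next I would reduce the backward transition probabilities to a one-step Bayes computation. By the Markov property of $(X_n)$, conditionally on $X_n=k$ the past $(X_0,\dots,X_n)$ and the future $(X_n,\dots,X_m)$ are independent, so
\[
\Prob{X_{n-1}=j\mid X_n=k,\,X_m=\ell}=\Prob{X_{n-1}=j\mid X_n=k}=\frac{\Prob{X_{n-1}=j}\,\Prob{X_n=k\mid X_{n-1}=j}}{\Prob{X_n=k}}.
\]
Observe that this quantity depends only on the current position $(n,k)$ and not on the terminal level $\ell$, which is exactly what is needed to identify $W$, conditioned on $W_0=(m,\ell)$, with a Markov chain having transitions $p_\varepsilon(n,k)$. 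Plugging the two cases $j=k$ and $j=k-1$ into the formula with the explicit marginals computed above, the factors $N!/(N-k)!$ and the powers of $N^n$ telescope; one is left with
\[
\Prob{X_{n-1}=k\mid X_n=k}=\frac{k\,S(n-1,k)}{S(n,k)},\qquad \Prob{X_{n-1}=k-1\mid X_n=k}=\frac{S(n-1,k-1)}{S(n,k)},
\]
which are precisely $p_0(n,k)$ and $p_1(n,k)$ associated via \eqref{pt} to $(a,b)(n,k)=(1,k)$, and these ratios do not involve $N$. The Pascal-type identity $S(n,k)=S(n-1,k-1)+k\,S(n-1,k)$ is what guarantees that these two numbers sum to one, so no separate verification is needed.

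Nothing here is really deep; the only point that deserves attention is the passage from $\Prob{X_{n-1}=j\mid X_n=k,\,X_m=\ell}$ to $\Prob{X_{n-1}=j\mid X_n=k}$, which uses the Markov property in the right direction and is the reason why the resulting transition law is independent of $\ell$ and of $N$. Everything else is bookkeeping around the formula for the number of surjections, and one may write the argument uniformly for all four examples in Subsection~\ref{examplesMC}, the four Pascal-type recursions \eqref{Pascal} simply reflecting four different choices of forward dynamics for $X$.
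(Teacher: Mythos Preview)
Your proof is correct and follows essentially the same route as the paper: both compute the marginals $h_n(k)=N^{k\downarrow}S(n,k)/N^n$ and the forward kernel $Q_{n-1}$, then apply the time-reversal formula $P_{(n,k),(n-1,j)}=h_{n-1}(j)Q_{n-1,j,k}/h_n(k)$ (the paper's Proposition~\ref{timereversal}, which is exactly your one-step Bayes computation together with the Markov-property remark) to obtain $p_1(n,k)=S(n-1,k-1)/S(n,k)$ and $p_0(n,k)=k\,S(n-1,k)/S(n,k)$, independent of $N$. The only stylistic difference is that the paper packages the Bayes/Markov step once in a general lemma, while you spell it out directly.
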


\subsubsection{Chinese restaurant process} In the Chinese restaurant process with $(0,\theta)$ seating plan, defined at Section \ref{kennedytype} (see also, e.g., \cite[Ch. 3]{MR2245368}), let $X_{n}$ denote the number of occupied tables after the arrival of the $n$th customer. 
\begin{prop}
\label{chine}
The stochastic process $W=(W_{n})_{0\le n\le m}$, conditioned to $W_{0}=(m,\ell)$, or equivalently to $X_{m}=\ell$, 
is the Markov chain with transition probabilities $(p_{\varepsilon}(n,k))_{\varepsilon,n,k}$ related to Stirling numbers of the first kind. Its distribution does not depend on $\theta$.
\end{prop}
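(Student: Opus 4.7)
The plan is to mirror the proof of Proposition \ref{kennedy}: establish the marginal distribution of $X_n$ in the $(0,\theta)$ CRP, exploit the Markov property of $(X_n)$ to reduce the conditioning on $X_m = \ell$ to a pointwise conditioning on $X_r$, then apply Bayes' rule and identify the one-step ratios with \eqref{pt} under $(a,b)(n,k) = (1, n-1)$.

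The first step is to recall the seating rule: at step $r \ge 1$, the incoming customer opens a new table with probability $\theta/(\theta + r - 1)$ and joins an existing one otherwise. Consequently $Y_r = X_r - X_{r-1}$ is conditionally Bernoulli given $X_{r-1}$, so $(X_n)$ is itself a Markov chain on $\N$; hence $W$ is automatically Markov after time-reversal and conditioning on its starting point. The main quantitative input is then the marginal identity
\[
\Prob{X_n = k} \;=\; \frac{c(n,k)\,\theta^{k}}{(\theta)_{n}},
\]
where $(\theta)_n = \theta(\theta+1)\cdots(\theta+n-1)$ and $c(n,k)$ denotes the unsigned Stirling number of the first kind. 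A direct induction on $n$ works: the factor $(\theta + n - 1)^{-1}$ coming from the new-table/old-table dichotomy matches the two summands of the recursion $c(n,k) = c(n-1,k-1) + (n-1)\,c(n-1,k)$ exactly.

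For the conditional argument, by the Markov property of $(X_j)$ the terminal conditioning drops out past $X_r$: for $0 < r \le m$ and admissible $k,\ell$,
\[
\Prob{X_{r-1} = k - \varepsilon \,|\, X_r = k,\ X_m = \ell} \;=\; \Prob{X_{r-1} = k - \varepsilon \,|\, X_r = k}.
\]
Applying Bayes on the right with the forward transition and the marginal above yields
\[
\Prob{X_{r-1} = k \,|\, X_r = k} = \frac{(r-1)\,c(r-1,k)}{c(r,k)}, \qquad \Prob{X_{r-1} = k-1 \,|\, X_r = k} = \frac{c(r-1,k-1)}{c(r,k)},
\]
the factors $\theta^{k}$, $(\theta)_{r-1}/(\theta)_r$ and $(\theta + r - 1)$ cancelling cleanly in both ratios. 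Setting $n = r$, these are exactly $p_0(n,k)$ and $p_1(n,k)$ read off \eqref{pt} with $(a,b)(n,k) = (1,n-1)$. The vanishing of $\theta$ from the final answer is the expected sufficiency statement: $X_m$ is a sufficient statistic for $\theta$ under the Ewens law.

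The main potential obstacle is the marginal formula of step two, but this is a classical identity that can be proven by induction, or equivalently extracted from the Ewens sampling formula. The only additional care is needed at the edges $k \in \{0, r\}$, where the conventions $c(0,0) = 1$, $c(n,0) = 0$ for $n \ge 1$, and $c(n,n) = 1$ ensure that one of the two summands in the Stirling recursion vanishes precisely when $(r,k)$ lies on the boundary of $S^{\star}$.
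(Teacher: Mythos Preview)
Your proof is correct and follows essentially the same route as the paper: identify the marginal law $h_n(k)=\dfrac{\theta^{k}}{(\theta)^{\uparrow n}}{n\brack k}$, combine it with the forward transition $Q_{n-1,k-1,k}=\theta/(\theta+n-1)$, and read off the reversed kernel from Bayes' formula. The only cosmetic difference is that the paper packages the reversal step once and for all in Proposition~\ref{timereversal} and then merely plugs in $h_n$ and $Q_n$, whereas you redo the time-reversal argument by hand (the line $\Prob{X_{r-1}=k-\varepsilon\mid X_r=k,\,X_m=\ell}=\Prob{X_{r-1}=k-\varepsilon\mid X_r=k}$ is exactly the content of that proposition in this case).
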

\begin{rem}
As a consequence, in the three previous cases, given the data $(X_{n})_{0\le n\le m}$,  $X_m$ (or $W_0$) are sufficient statistics for the parameters $p$, $N$ or $\theta$, respectively.
\end{rem}

\subsubsection{One-dimensional Internal Diffusion Limited Aggregation} 
Finally, in the one-dimensional Internal Diffusion Limited Aggregation process (iDLA), let $X_{n}$ denote the number of particles settled to the right of the origin after the release of the $n$th particle. Then
\begin{prop}
\label{idla}
The stochastic process $W=(W_{n})_{0\le n\le m}$, conditioned to $W_{0}=(m,\ell)$, is the Markov chain with transition probabilities $(p_{\varepsilon}(n,k))_{\varepsilon,n,k}$ related to Euler's triangle.
\end{prop}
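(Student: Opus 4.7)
The strategy mirrors the Kennedy-type argument used for Propositions~\ref{kennedy}--\ref{chine}: establish that the forward process $(X_n)$ is itself Markov, compute its one-step forward transitions, derive the law of $X_n$, and then read off the reversed transitions by Bayes. What is new compared to the three previous cases is that the increments $X_n-X_{n-1}$ are not exogenous Bernoullis, so the Markov property of $(X_n)$ itself must be established.

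The key one-dimensional geometric input is that the iDLA cluster after $n$ releases is almost surely a contiguous integer interval containing the origin. This is easy to prove by induction on $n$: the $(n+1)$th particle starts at $0$, performs a simple symmetric random walk, and necessarily gets absorbed at one of the two unoccupied sites immediately adjacent to the cluster, extending it by one on either the left or the right. Consequently the cluster depends on the past only through $(n,X_n)$, so $(X_n)$ is Markov. Its forward transitions are given by the classical gambler's ruin applied to simple symmetric random walk from $0$ with absorbing sites $-(n-X_n)$ and $X_n+1$:
\[\Prob{X_{n+1}=k+1\mid X_n=k}=\frac{n-k}{n+1},\qquad \Prob{X_{n+1}=k\mid X_n=k}=\frac{k+1}{n+1}.\]

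A direct induction, starting from $\Prob{X_0=0}=1=\euler{0}{0}/0!$, then gives $\Prob{X_n=k}=\euler{n}{k}/n!$, because the recursion produced by the above transitions, multiplied through by $n!$, is precisely the Euler triangle recurrence $\euler{n}{k}=(n-k)\euler{n-1}{k-1}+(k+1)\euler{n-1}{k}$. Conditioning the Markov chain $(X_n)_{0\le n\le m}$ on its terminal value $X_m=\ell$ produces a reverse-time Markov chain, and since $\Prob{X_m=\ell\mid X_n=k,X_{n-1}=j}=\Prob{X_m=\ell\mid X_n=k}$ by the Markov property, Bayes' formula gives
\[\Prob{X_{n-1}=j\mid X_n=k,X_m=\ell}=\frac{\Prob{X_{n-1}=j}\,\Prob{X_n=k\mid X_{n-1}=j}}{\Prob{X_n=k}},\]
independently of $\ell$. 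Substituting the formulas derived in the previous two steps yields exactly $p_0(n,k)$ and $p_1(n,k)$ of~\eqref{pt} with $(a,b)(n,k)=(n-k,k+1)$.

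The only non-routine ingredient is this structural step: showing the iDLA cluster remains an interval a.s., so that $(X_n)$ is Markov at all. This is a strictly one-dimensional phenomenon; in higher dimensions the corresponding reduction fails and no single scalar statistic would carry all the relevant information. Once the Markov property of $(X_n)$ is in hand, matching the Euler recurrence and applying Bayes is mechanical.
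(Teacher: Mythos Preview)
Your proof is correct and follows the same time-reversal strategy as the paper: establish that $(X_n)$ is a forward Markov chain, identify $h_n(k)=\euler{n}{k}/n!$, and read off the reversed kernel via Bayes (the paper's Proposition~\ref{timereversal}). The only difference is that you supply the interval-plus-gambler's-ruin argument for the forward transitions yourself, whereas the paper outsources this to \cite{zbMATH07227008} (the equivalence with the descent process of the random permutation) and then plugs $h_n$ and $Q_n$ into Proposition~\ref{timereversal} directly.
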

More precise definitions, and proofs, are to be found at Section \ref{kennedytype}.
\subsection{Simulations}
\label{simulations}

In the case of  Pascal's triangle, the behaviour of this time-reversed Markov chain is well understood since forever.  Quite recently, \cite{amri:hal-02164935} gave a rather precise analysis of the analog time-reversed Markov chain related to Stirling numbers of the second kind, with combinatorial analysis of finite automata as a motivation. We hope to improve some of their results and proofs.\\

In this section, in order to surmise the behaviour of these time-reversed Markov chains,  we present the result of some  simulations. For each case, the figures below show sample paths starting at $(m,mt)$ with $t\in \{0.05,...,0.95\}$ and $m=500$ :\\[-1cm]

\newpage
\begin{center}
\begin{minipage}{\linewidth}
    \centering
    \begin{figure}[H]
    \includegraphics[width=\linewidth]{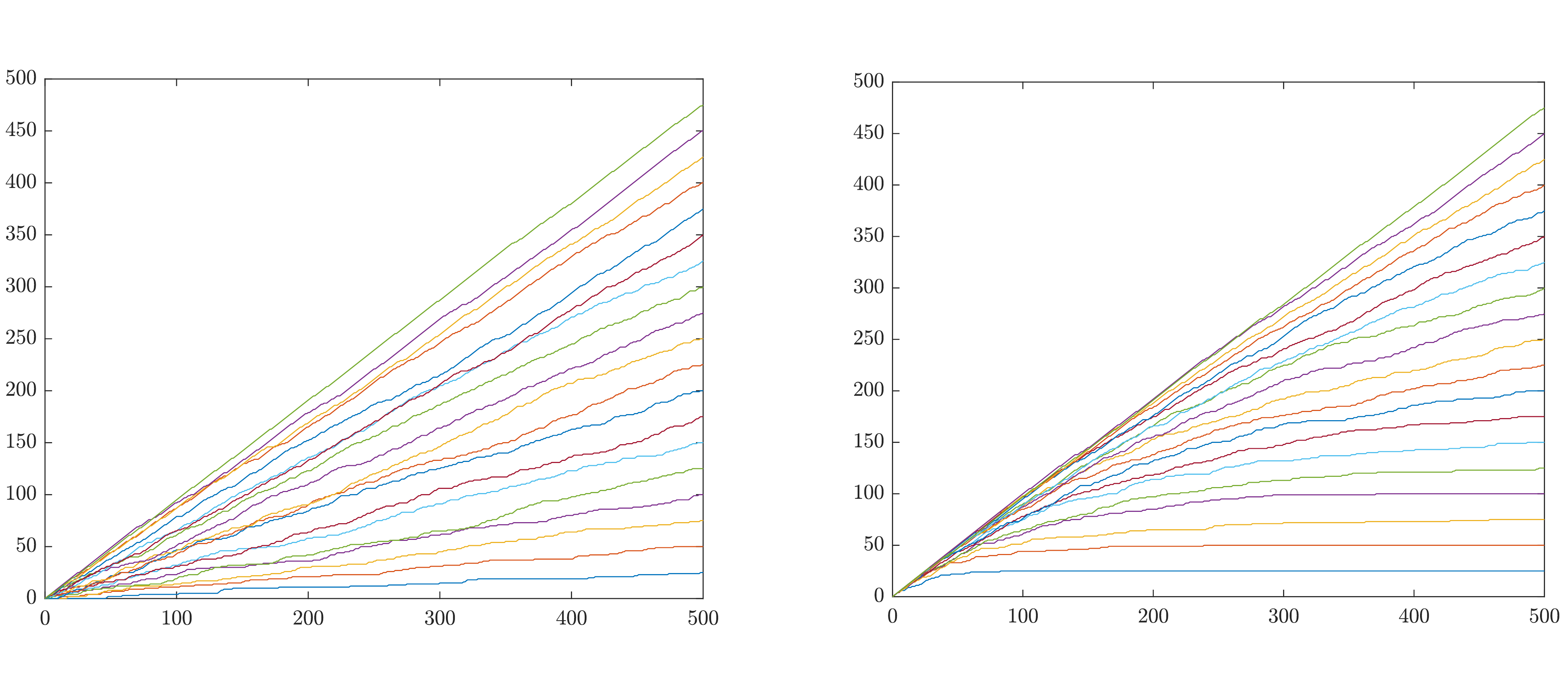}
    \begin{minipage}{0.45\linewidth}
    \centering
          Figure 1: Pascal's triangle.
      \end{minipage}
      \hspace{0.05\linewidth}
      \begin{minipage}{0.45\linewidth}
      \centering
          Figure 2: Stirling numbers of the second kind.
      \end{minipage}
    \end{figure}
\end{minipage}
\begin{minipage}{\linewidth}
    \centering
    \begin{figure}[H]
    \includegraphics[width=\linewidth]{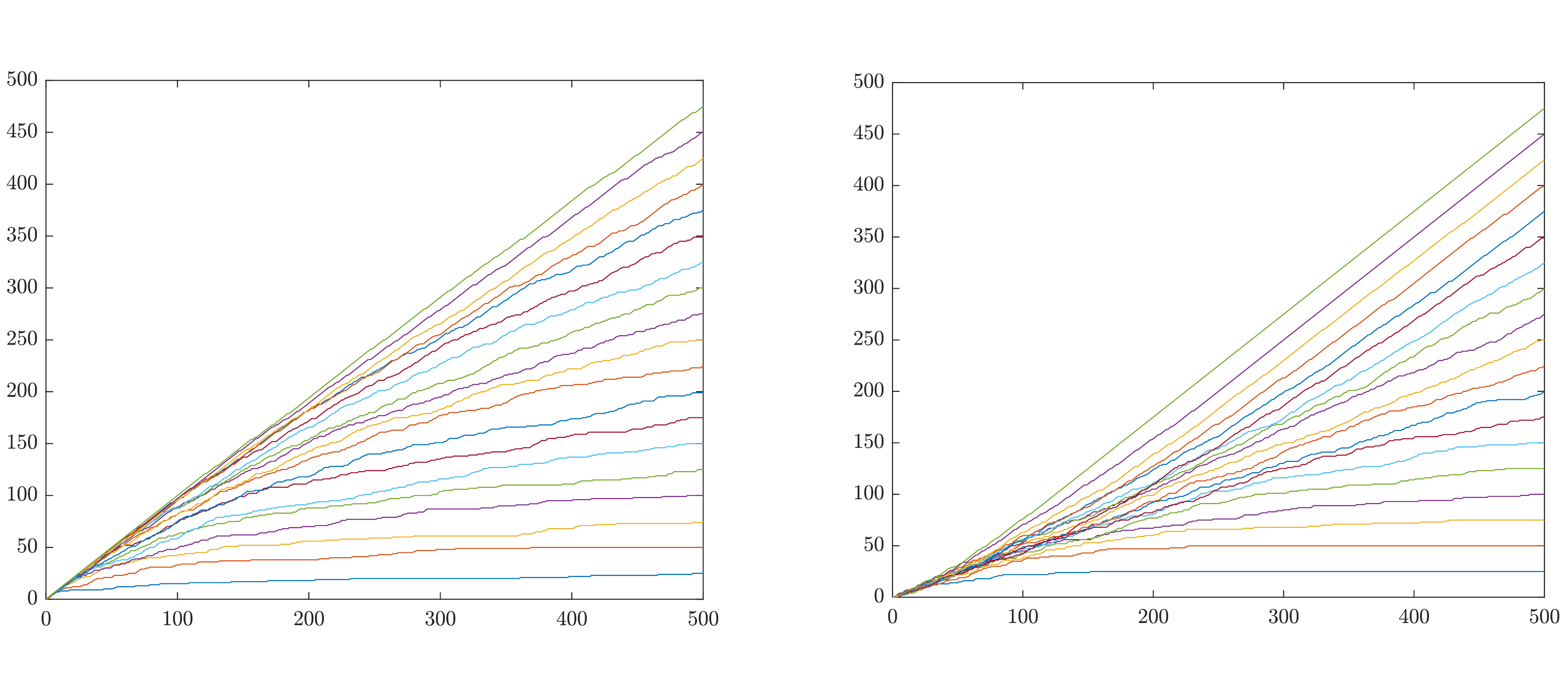}
    \begin{minipage}{0.45\linewidth}
    \centering
          Figure 3: Stirling numbers of the first kind.
      \end{minipage}
      \hspace{0.05\linewidth}
      \begin{minipage}{0.45\linewidth}
      \centering
        Figure 4: Eulerian numbers.\\
      \end{minipage}
    \end{figure}
\end{minipage}
\end{center}

Now, in order to compare the four combinatorial triangles, we show the average of $100$ sample paths for each triangle, for $m=1000$ and $t\in \{0.05,...,0.95\}$ :

\begin{center}
\begin{minipage}{\linewidth}
    \centering
    \begin{figure}[H]
    \includegraphics[width=\linewidth]{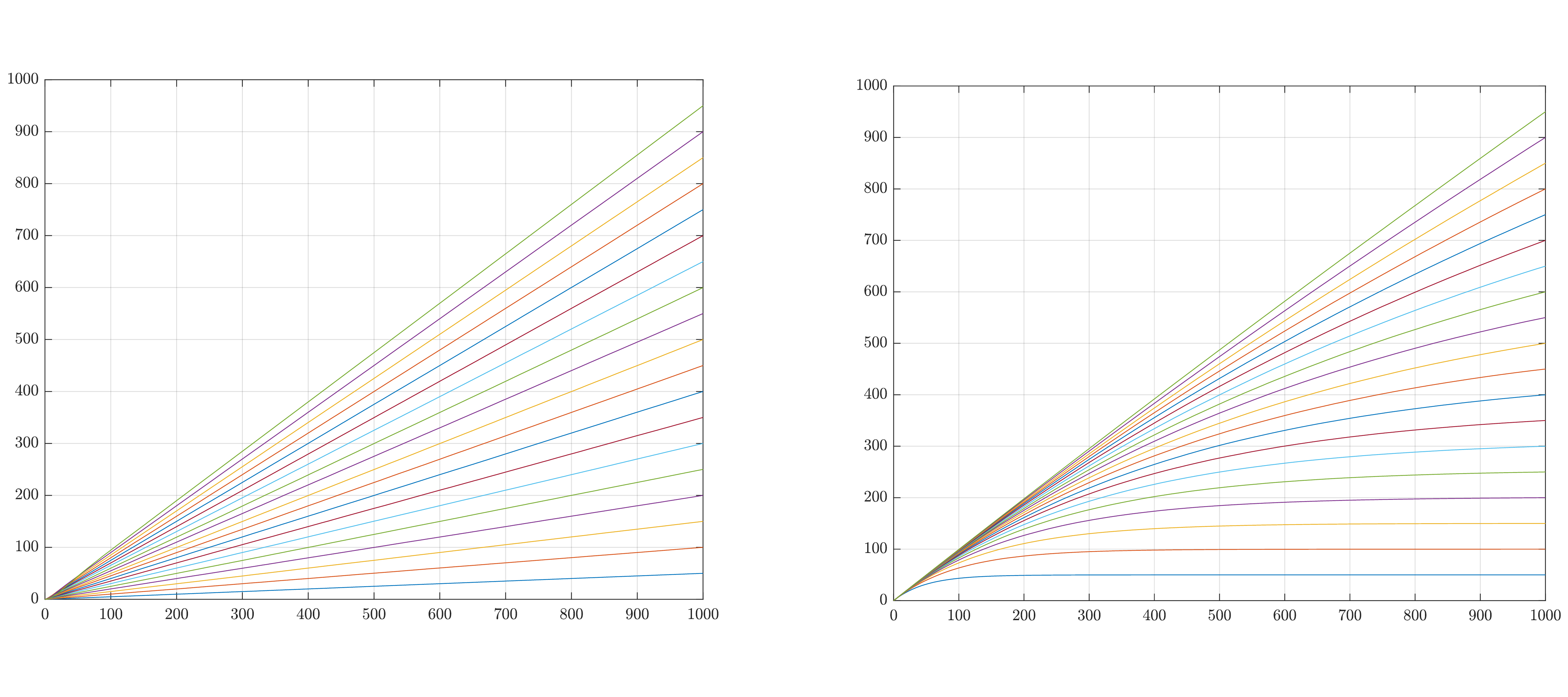}
    \begin{minipage}{0.45\linewidth}
    \centering
          Figure 5: Pascal's triangle.
      \end{minipage}
      \hspace{0.05\linewidth}
      \begin{minipage}{0.45\linewidth}
      \centering
          Figure 6: Stirling numbers of the second kind.
      \end{minipage}
    \end{figure}
\end{minipage}
\begin{minipage}{\linewidth}
    \centering
    \begin{figure}[H]
    \includegraphics[width=\linewidth]{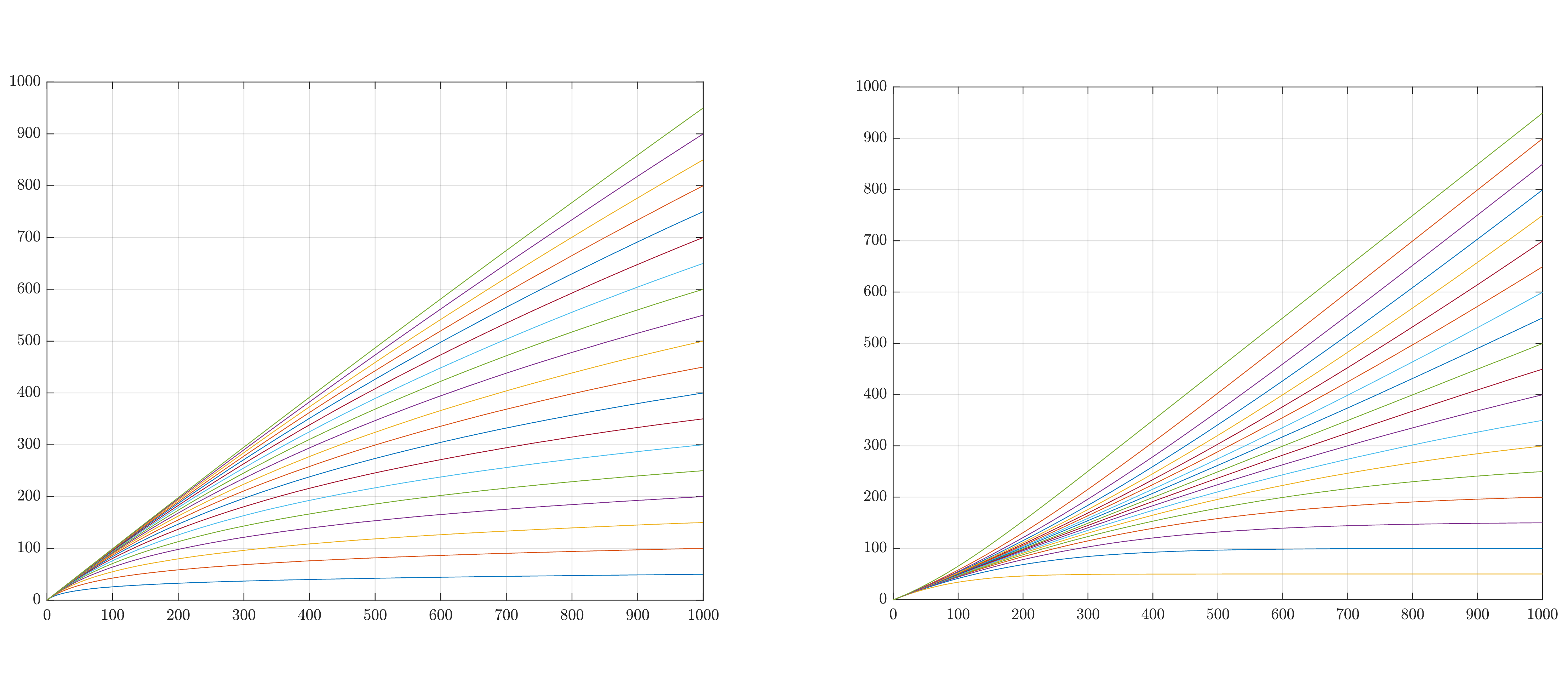}
    \begin{minipage}{0.45\linewidth}
    \centering
          Figure 7: Stirling numbers of the first kind.
      \end{minipage}
      \hspace{0.05\linewidth}
      \begin{minipage}{0.4\linewidth}
      \centering
        Figure 8: Eulerian numbers.\\
      \end{minipage}
    \end{figure}
\end{minipage}
\end{center}

In the first two cases, the smooth nature of these averaged paths is not unexpected due to old, and more recent, fluid approximation results, see \cite{amri:hal-02164935} or the next sections. This paper aims at a global explanation of the asymptotic behaviour of the four Markov chains exhibited by these simulations. 


\subsection{Asymptotics of sample paths, and field lines of vector fields}
\label{sec:stirling}
Combinatorial analysis, see \cite{MR0120204} or \cite{MR375433}, yields that, 
\begin{theo}
\label{slope}
In each of the four cases, there exists a function $\varphi : (0,+\infty)\rightarrow[0,1]$ such that,  for any positive number $\lambda_{\infty}$, when $(m,\ell)\rightarrow+\infty$ and $\lim m/\ell=1+\lambda_{\infty}$, 
\[\lim p_{1}(m,\ell)=\varphi(\lambda_{\infty}).\]
\end{theo}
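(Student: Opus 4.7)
In each of the four cases, the plan is to combine a saddle-point expansion of $T(n,k)$, valid when $n,k\to\infty$ with $k/n\to\alpha:=1/(1+\lambda_\infty)\in(0,1)$, with the definition $p_1(m,\ell)=a(m,\ell)\,T(m-1,\ell-1)/T(m,\ell)$. The saddle-point yields an approximation of the form $T(n,k)\sim C_n(k/n)\,\rho(k/n)^{-n}$ for some continuous $\rho$; the exponentially varying factors then cancel in the ratio $T(m-1,\ell-1)/T(m,\ell)$, leaving only algebraic terms depending on the common limit $\alpha$. These asymptotics are all classical; see, e.g., Comtet~\cite{MR0120204,MR375433}.

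For Pascal's triangle no asymptotics are needed: $p_1(m,\ell)=\binom{m-1}{\ell-1}/\binom{m}{\ell}=\ell/m$, giving $\varphi(\lambda)=1/(1+\lambda)$. For the two Stirling triangles, I would start from the exponential generating functions $(e^x-1)^k/k!$ and $(-\log(1-x))^k/k!$ and apply the saddle-point method in $x$ for $(n,k)=(m,\ell)$. The saddle points $r^\star(\lambda_\infty)$ and $s^\star(\lambda_\infty)$ are the unique positive solutions of
\[
\frac{r\,e^{r}}{e^{r}-1}=1+\lambda_\infty,\qquad \frac{s}{(1-s)(-\log(1-s))}=1+\lambda_\infty.
\]
Replacing $(m,\ell)$ by $(m-1,\ell-1)$ perturbs the saddle by $O(1/m)$, the exponential factors cancel, and a short calculation using the two implicit equations then yields
\[
\varphi(\lambda_\infty)=e^{-r^\star}\quad\text{(Stirling 2nd kind)},\qquad \varphi(\lambda_\infty)=1-s^\star\quad\text{(Stirling 1st kind)},
\]
both lying in $(0,1)$ for every $\lambda_\infty>0$, as required.

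The Eulerian case is the main obstacle, since the relevant generating function is genuinely bivariate. Starting from $\sum_{n}A_n(y)x^n/n!=(y-1)/(y-e^{(y-1)x})$ where $A_n(y)=\sum_k A(n,k)y^k$, the singularity closest to $0$ in $x$ is $\log(y)/(y-1)$, whence $A_n(y)/n!\sim c(y)\,((y-1)/\log y)^{n}$ for $y>0$, $y\neq1$. A saddle-point in $y$ with $k/n\to 1/(1+\lambda_\infty)$ then defines $y^\star(\lambda_\infty)$ as the unique positive solution of $y/(y-1)-1/\log y=1/(1+\lambda_\infty)$, and the same cancellation mechanism applied to $A(m-1,\ell-1)/A(m,\ell)$, after multiplication by $a(m,\ell)=m-\ell$, produces
\[
\varphi(\lambda_\infty)=\frac{\lambda_\infty}{1+\lambda_\infty}\cdot\frac{y^\star\log y^\star}{y^\star-1}.
\]
Continuity of $\varphi$ on $(0,+\infty)$ follows from the implicit function theorem applied to each saddle-point equation, and its range being in $[0,1]$ is guaranteed a posteriori by the probabilistic meaning of $p_1$.
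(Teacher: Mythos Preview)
Your proposal is correct and follows essentially the same route as the paper: saddle-point (or, for Euler's triangle, large-deviation) asymptotics for $T(n,k)$, followed by the cancellation of the dominant exponential factor in $T(m-1,\ell-1)/T(m,\ell)$, which works precisely because the saddle is a stationary point of $x\mapsto \log(B(x)/x^{1+\lambda})$ (resp.\ $x\mapsto \log(B(x)e^{-xt})$). The paper packages the first three cases uniformly via the factorization $V_k=A\cdot B^k$ and, in the Eulerian case, works with $\zeta=\log y^\star$ rather than $y^\star$, but the saddle equations and the resulting $\varphi$'s coincide with yours.
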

\noindent At the end of this section,  the function $\varphi$ is described for each of the four cases. 

We set, for $(m,\ell)\in\mathring{S}$,
$$\lambda=\lambda(m,\ell)=\frac{m-\ell}\ell.$$
As a direct consequence of Theorem \ref{slope}, one expects a fluid approximation of the previous Markov chains by a special family of curves : let $\mathbb{P}_{(m,\ell)}$ denote the probability distribution of the Markov chain $W$ starting from $(m,\ell)$, and let $(x,\gamma_{\lambda}(x))_{0\le x\le n}$ be the field line going through the point $(1,\ell/m)=\pa{1,1/(1+\lambda)}$ for the vector field $(1,\varphi(-1+x/y))$, or, equivalently, let $\gamma_{\lambda} $ denote the solution of the ODE  
\begin{equation}
\label{fieldline}
y^\prime=\varphi\pa{\frac {x-y}y}
\end{equation}
that satisfies $y(1)=1/(1+\lambda)$. So far we have a complete proof of this approximation  only in the first three cases :
\begin{theo}
\label{samplepath}
In the first three cases, for any $\eta\in(0,1/2)$ and any $\lambda_{\infty}>0$, when $(m,\ell)\rightarrow+\infty$ and $\lim \lambda(m,\ell)=\lambda_{\infty}$, 
\[
\lim\mathbb{P}_{(m,\ell)}\pa{\sup_{0\le t\le 1}\pa{\abs{w_m(t)-\gamma_{\lambda_{\infty}}(t)}}\ge m^{-\eta}}=0.
\]
\end{theo}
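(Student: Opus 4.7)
My plan is to use a standard fluid-limit argument based on a martingale decomposition of the chain, coupled with the pointwise asymptotics of Theorem~\ref{slope} upgraded to a uniform quantitative form. It is more convenient to work directly with the second coordinate $K_{n'}$ of the Markov chain, writing $W_{n'} = (m - n', K_{n'})$; this starts at $K_0 = \ell$, ends at $K_m = 0$, and is related to the rescaled sample path by $w_m(t) = m^{-1} K_{m - \lfloor mt \rfloor}$ up to a rounding error, so it suffices to prove the analogous uniform estimate for $s \mapsto m^{-1} K_{\lfloor ms \rfloor}$ against the time-reversed field line $\tilde\gamma_\lambda(s) := \gamma_\lambda(1 - s)$.

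Since $K_{n'+1} - K_{n'} \in \{-1, 0\}$ with conditional mean $-p_1(m - n', K_{n'})$, the Doob decomposition reads
\begin{align*}
K_{n'} = \ell - \sum_{j=0}^{n'-1} p_1(m - j, K_j) + M_{n'},
\end{align*}
where $M_{n'}$ is a martingale whose increments are bounded by $1$. Azuma--Hoeffding then gives $\Prob{\sup_{n' \le m} |M_{n'}| \ge m^{1/2}\log m} = O(m^{-A})$ for every $A > 0$, so after dividing by $m$ the martingale contributes at most $m^{-1/2 + o(1)}$, which is negligible compared to $m^{-\eta}$ for any $\eta < 1/2$.

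The core of the argument is thus to compare the remaining drift sum $m^{-1} \sum_{j=0}^{n'-1} p_1(m-j, K_j)$ to the Riemann integral of $\varphi$ along the field line. For this I need a \emph{uniform quantitative} version of Theorem~\ref{slope}: on every compact set of the form $\{(n,k) : \delta \le k/n \le 1-\delta\}$ one should have $|p_1(n,k) - \varphi((n-k)/k)| = O(n^{-\delta'})$ for some $\delta' = \delta'(\delta) > 0$. In the Pascal case this is trivial, since $p_1 = k/n$ exactly; for the Stirling triangles of either kind, such rates can be extracted from the classical saddle-point expansions of $S(n,k)$ and of the signless Stirling numbers $c(n,k)$, cf.~\cite{MR0120204, MR375433}. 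Given such a bound, a discrete Gronwall argument along the trajectory, using Lipschitz continuity of $\varphi$ on compact subsets of $(0, +\infty)$, yields $\sup_{n' \le m} |m^{-1} K_{n'} - \tilde\gamma_\lambda(n'/m)| \le m^{-\eta}$ with high probability, on a good event where the chain stays inside the prescribed compact region.

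The main obstacle is the boundary layer near $n' = m$, where $\tilde\gamma_\lambda(s) \to 0$, $\lambda \to \infty$, and the uniform estimate on $|p_1 - \varphi|$ degenerates. I plan to treat this tail separately: a confinement argument shows that, until $n' \ge m - m^{1-\eta'}$, the chain stays in a region where the uniform bound applies, and then the remaining $m^{1-\eta'}$ steps are controlled crudely by the deterministic bound $|K_{n'+1} - K_{n'}| \le 1$ together with the fact that $\tilde\gamma_\lambda$ is itself $O(m^{-\eta'})$ near $s = 1$. An analogous but easier argument rules out excursions toward the diagonal $k \approx n$. The fact that this boundary analysis breaks down more seriously for the Eulerian triangle -- where $\varphi$ has additional singular behaviour at both ends -- is presumably the reason why the fourth case is left open in the present paper.
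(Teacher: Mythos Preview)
Your outline is a plausible route---essentially the Wormald/differential-equation method that the paper explicitly says was used in~\cite{amri:hal-02164935} for Stirling numbers of the second kind, and that it sets out to replace by something simpler. The paper's proof is completely different and avoids all of the technical apparatus you invoke.

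The paper never analyses the reversed chain directly. Instead it exploits Propositions~\ref{kennedy}--\ref{chine}: under $\mathbb{P}_{(m,\ell)}$, the process $(X_n)_{0\le n\le m}$ has the law of a simple \emph{forward} process (Bernoulli walk, coupon collector, Chinese restaurant) conditioned on $\{X_m=\ell\}$, and this law does not depend on the free parameter ($p$, $N$, or $\theta$). One therefore picks the parameter so that $\mathbb{P}_\theta(X_m=\ell)\sim c/\sqrt{m}$ (local CLT / saddle-point estimate, one line from~\cite{MR0120204}), applies Hoeffding or Azuma--Hoeffding to the \emph{unconditioned} forward process---whose increments and means are completely explicit, no asymptotics of $p_1$ needed---to get $\mathbb{P}_\theta(A_m)\le 2m\,e^{-c\,m^{1-2\eta}}$, and concludes via
\[
\mathbb{P}_{(m,\ell)}(A_m)=\frac{\mathbb{P}_\theta(A_m\cap\{X_m=\ell\})}{\mathbb{P}_\theta(X_m=\ell)}=O\bigl(m^{3/2}e^{-c\,m^{1-2\eta}}\bigr).
\]
No uniform quantitative version of Theorem~\ref{slope}, no Gronwall, no boundary-layer analysis.

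Your approach would in principle work, but the step ``such rates can be extracted from the classical saddle-point expansions'' hides exactly the tedious uniform analysis the paper is trying to avoid; getting $O(n^{-\delta'})$ error terms uniformly as $k/n$ approaches $0$ or $1$ is genuinely delicate for both Stirling triangles. Your boundary-layer sketch is also thin: you need the confinement argument \emph{before} you know the chain tracks the field line, which is circular unless you set up a stopping-time bootstrap carefully.

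Finally, your diagnosis of why the Eulerian case is open misses the point. In the paper's framework the obstruction is structural: the iDLA/descent process has no free parameter to tune (here $\theta=1$ is forced), so one cannot arrange $\mathbb{P}(X_m=\ell)$ to be merely polynomially small for an arbitrary target $\ell$, and the conditioning trick collapses. Boundary behaviour of $\varphi_4$ is not the issue.
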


\noindent Note that the special form of the ODE \eqref{fieldline} entails that the set of field lines is invariant by positive homotheties.

This kind of statement seems to hold true for eulerian numbers, according to our simulations (see Section \ref{simulations}), but remains an open question. For  Stirling numbers of the first kind, Theorem \ref{samplepath} seems to be new, as far as we know. For Stirling numbers of the second kind, Theorem \ref{samplepath}  is a vastly improved version of a result that appeared in \cite{amri:hal-02164935}, in which the  proof relies mainly on  uniform bounds for 
\[m\ \abs{p_{1}(m,\ell)-\varphi\pa{\lambda(m,\ell)}},\]
on domains that approach $\mathring{S}$ as well as possible. These bounds  follow from a careful asymptotic analysis of $T(m,\ell)$, that should have some interest in itself. However the proof given here is much simpler.

Our choice of combinatorial triangles may seem arbitrary, and we confess it is : for instance, Bell's triangle or Delannoy's triangle have also Pascal's formulas, but of a slightly different form. It remains to see if the approach of this paper still produces results for Bell's triangle or Delannoy's triangle, in spite of these slight differences.

\subsubsection{Description of $\varphi$}
\begin{itemize}
\item \emph{Pascal's triangle.} It is well known that for all $(m,\ell)\in S^{\star}$,
$$p_{1}(m,\ell)=\frac {\ell}m,$$
so that 
$$\varphi_{1}(\lambda)=\frac1{1+\lambda}.$$
Relation \eqref{fieldline} reduces to $y^\prime=y/x$, with the linear functions as solutions, as expected.
\item \emph{Stirling numbers of the second kind.} For $\lambda>0$, let $\varphi_{2}$ be defined, through $\impl_{2}(\lambda)$, the unique positive solution of
\begin{equation}
\label{ckor2}
\frac{\impl_{2}}{1-e^{-\impl_{2}}}=1+\lambda,
\quad\text{by}\ \varphi_{2}(\lambda)=e^{-\impl_{2}}.
\end{equation}
Then, for $x\ge 0$,
\begin{equation}
\label{gd1}
\gamma_{\lambda}(x)=\frac{1-e^{-x\,\impl_{2}(\lambda)}}{\impl_{2}(\lambda)}.
\end{equation}

\item \emph{Stirling numbers of the first kind.} For $\lambda>0$, let $\varphi_{3}$ be defined, through $\impl_{3}(\lambda)$, the unique  solution, in $(0,1)$,  of
$$\frac{\impl_{3}}{\pa{\impl_{3}-1}\ln(1-\impl_{3})}=1+\lambda,
\quad\text{by}\ \varphi_{3}(\lambda)=1-\impl_{3}.$$
Then, for $x\ge 0$,
\begin{equation}
\label{gd2}
\gamma_{\lambda}(x)=\frac{1-\impl_{3}(\lambda)}{\impl_{3}(\lambda)}\ln\pa{\frac{1-\impl_{3}(\lambda)+x\,\impl_{3}(\lambda)}{1-\impl_{3}(\lambda)}}.
\end{equation}

\item \emph{Eulerian numbers.} For $\lambda>0$, let $\varphi_{4}$ be defined, through $\impl_{4}(\lambda)$, the unique  solution, in $\mathbb{R}$,  of
$$\frac1{1+\lambda}=\frac{e^{\impl_{4}}}{e^{\impl_{4}} -1}-\frac{1}{\impl_{4}},
\quad\text{by}\ \varphi_{4}(\lambda)=1-\frac{\impl_{4}}{(1+\lambda)(e^{\impl_{4}}-1)}.$$
At the moment, we are unaware of any closed form formula for $\gamma_{\lambda}$ in this case.
\end{itemize}

\section{Time reversal and Markov property for $W$}
\label{kennedytype}
In this section, we prove Propositions \ref{kennedy}, \ref{coupon}, \ref{chine} and \ref{idla}. The notations  $X_{n}$, $Y_{n}$, $W_{n}$ are defined at Section \ref{examplesMC}.

\subsection{Time reversal}\label{sec:timereversal}
As already known at least since Kolmogorov, see \cite[(7)]{zbMATH03019588}, a time-reversed Markov process is still a Markov process, but it is an inhomogeneous one. Let us recall the basic facts that we need here : if $h_k$ denotes the probability distribution of $X_k$ and if $X=(X_k)_{k\ge0}$ is an inhomogeneous Markov chain with kernels $(Q_k) _{k\ge0} $, i.e.
$$Q_{k,i,j}=\pr{X_{k+1}=j\ |\,X_k=i},$$
then 
\begin{prop}
\label{timereversal}
$W=(W_n)_{0\le n\le m}$ is a Markov chain with state space $S$ and with kernel $P$ defined on $S^\star$ by 
$$P_{(n,i),(n-1,j)}=\frac{h_{n-1}(j)Q_{n-1,j,i}}{h_{n}(i)}.$$
\end{prop}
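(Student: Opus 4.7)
The plan is to verify the identity and the Markov property simultaneously by a direct Bayes computation. Write any atom of the joint law of $X$ via the forward Markov kernels and the initial distribution:
\begin{equation*}
\mathbb{P}(X_0 = i_0, \ldots, X_m = i_m) = h_0(i_0) \prod_{k=0}^{m-1} Q_{k,i_k,i_{k+1}}.
\end{equation*}
Since the event $\{W_n = (m-n, X_{m-n})\}$ reduces to an event about $X_{m-n}$, the claim about $W$ is equivalent to an assertion about the conditional law of $X_{n-1}$ given $(X_n, X_{n+1}, \ldots, X_m)$: namely, for every time index $n \in \{1, \ldots, m\}$ and every tuple $(j, i_n, \ldots, i_m)$ with $\mathbb{P}(X_n = i_n, \ldots, X_m = i_m) > 0$, one should have
\begin{equation*}
\mathbb{P}\bigl(X_{n-1} = j \,\big|\, X_n = i_n,\, X_{n+1} = i_{n+1}, \ldots, X_m = i_m\bigr) = \frac{h_{n-1}(j)\, Q_{n-1,j,i_n}}{h_n(i_n)}.
\end{equation*}

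I would obtain this in one line. Summing the product formula for the joint law of $X$ over the free indices $i_0, \ldots, i_{n-2}$ replaces the first $n-1$ factors of the product by $h_{n-1}(j)$, yielding
\begin{equation*}
\mathbb{P}(X_{n-1} = j,\, X_n = i_n, \ldots, X_m = i_m) = h_{n-1}(j)\, Q_{n-1,j,i_n} \prod_{k=n}^{m-1} Q_{k,i_k,i_{k+1}}.
\end{equation*}
Summing this over $j$ gives $h_n(i_n) \prod_{k=n}^{m-1} Q_{k,i_k,i_{k+1}}$, and dividing makes the entire product over the "future" cancel. The remaining ratio depends on the conditioning only through $i_n$, which is precisely the Markov property of the reversed process $W$, and the value of the ratio is exactly $P_{(n,i_n),(n-1,j)}$ as defined in the statement.

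There is essentially no obstacle here; the only point of care is that the kernel $P$ is only defined on $S^\star$, i.e.\ along states $(n,i)$ that are actually visited with positive probability by the forward chain started from its initial distribution, so that the denominator $h_n(i)$ is nonzero. This is consistent with the setup, since $W$ can only visit such states. The argument is the standard reversed-Markov-chain computation in its simplest inhomogeneous form, and it does not use any of the specific structure of the four examples under consideration; hence the same proposition will serve all four cases in the next subsection.
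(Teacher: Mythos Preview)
Your argument is correct and is essentially the same standard time-reversal computation as in the paper: both start from the forward product formula $h_0(x_0)\prod_k Q_{k,x_k,x_{k+1}}$ and use that marginalizing over the past produces the factors $h_{n}$. The paper presents it as a single telescoping identity for the full conditional path law, whereas you compute the one-step conditional directly; these are equivalent phrasings of the same proof. One small terminological slip: in this paper $S^\star$ means $S\setminus\{(0,0)\}$, not ``states of positive probability,'' so your remark about the domain of $P$ should be rephrased accordingly.
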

\begin{proof}First, since $(0,0)$ is only reached, eventually, at time $m$, there is no need to define $P_{(0,0),(.,.)}$. Also, $P$ is a  probability kernel due the Chapman-Kolmogorov equations for $(h_n)$ and $(Q_n)$. Then, 
\begin{align*}
\pr{(X_k)_{0\le k\le m}=(x_k)_{0\le k\le m}}&=h_0(x_0)\prod_{k=0}^{m-1}Q_{k,x_k,x_{k+1}},
\end{align*} 
thus, provided that $x_m=\ell$,
\begin{align*}
\pr{(X_k)_{0\le k\le m}=(x_k)_{0\le k\le m}\,|\ X_m=\ell}
&=h_0(x_0)\prod_{k=0}^{m-1}Q_{k,x_k,x_{k+1}}\ /\, h_m(\ell),
\\
&= \prod_{k=0}^{m-1} P_{(k+1,x_{k+1}),(k,x_k)}.
\end{align*} 
That is,
\begin{align*}
\pr{(W_k)_{0\le k\le m}=(m-k,x_{m-k})_{0\le k\le m}\,|\ W_0=(m,\ell)}
&= \prod_{k=0}^{m-1} P_{(k+1,x_{k+1}),(k,x_k)}, 
\end{align*} 
as expected.
\end{proof}
But for eulerian numbers, $h_n(k)=T(n,k)\theta^k/T_n(\theta)$, or $h_n(k)=T(n,k)\theta^{k\downarrow}/T_n(\theta)$, in which $T_n(\theta)$ is a normalizing constant :
\begin{equation}
T_n(\theta)=\sum_{k=0}^{n}T(n,k)\theta^k,\quad\textrm{or}\quad T_n(\theta)=\sum_{k=0}^{n}T(n,k)\theta^{k\downarrow}.
\end{equation}
For eulerian numbers, $h_n(k)=T(n,k)/T_n(1)=T(n,k)/n!$.

Note that $Q_n$ results from a natural growing mechanism with independent steps, that is, a Markovian growth process, obtained as follows : 
\begin{itemize}
\item 
by addition of an $n+1$th letter, either $\texttt{a}$ or $\texttt{b}$,  at the end of a random word of $\ac{\texttt{a}, \texttt{b}}^n$, in order to form an $n+1$-letters long word, for Pascal's triangle, 
\item by addition of the image of $n+1$ to a random mapping from  $[\![n]\!]$ to  $[\![N]\!]$, in order to form a random mapping from  $[\![n+1]\!]$ to  $[\![N]\!]$, for the the second Stirling triangle, 
\item by random insertion of $n+1$ in order to form a permutation on $[\![n+1]\!]$, starting from a permutation on $[\![n]\!]$, for the 2 other examples. 
\end{itemize}
In each case, the added letter, or integer, is chosen independently of the previous history of the growth process, hence the Markovian character of these growth processes. 
For the sake of brevity, in this paper, we call the last growth process the \emph{random permutation process}. For these three nonhomogeneous Markov growth processes, there exist well studied functionals that retain the Markov property, and whose one-dimensional distributions are given by the rows of the corresponding combinatorial triangle :
\begin{itemize}
\item the sequence of counts of letter $\texttt{a}$, in the sequence of words  defined previously, forms one of the most studied Markov chain : the simple random walk, whose one-dimensional distributions $h_n$ are binomial distributions, famously related to Pascal's triangle ;
\item the sequence of sizes of images, derived from the sequence of random mappings, is a famous inhomogeneous Markov chain, related to the coupon collector problem : it is the sequence of successive sizes of the collection. Its one-dimensional distributions $h_n$ have a simple expression in terms of the Stirling numbers of the second kind ;
\item the sequence of number of cycles, derived from the random permutation process, is an inhomogeneous Markov chain, related to the chinese restaurant process. Its one-dimensional distributions $h_n$ have a simple expression in terms of the Stirling numbers of the first kind ;
\item  the sequence of the number of descents, also derived from the random permutation process, is an inhomogeneous Markov chain, related to the internal diffusion limited aggregation process. Its one-dimensional distributions $h_n$ have a simple expression in terms of eulerian numbers.
\end{itemize} 
Chapman-Kolmogorov equations for these Markov chains are derived from Pascal's formulas for corresponding  triangles through renormalization : in our settings, $Q_{n}$ is defined by $(a,b)$ as follows 
\begin{align}
\label{directkernel}
Q_{n,x,y}&=c_{n}(\theta)\,\pa{b(n+1,y)\ind_{y=x\in [\![n]\!]}+a(n+1,y)\ \theta\ \ind_{y=x+1\in[\![n+1]\!]}},
\end{align}
in which $c_{n} (\theta) $ denotes a normalizing factor $T_{n}(\theta)/T_{n+1}(\theta)$, and $\theta= \theta^{x+1}/\theta^x$ should be replaced, in the last factor of \eqref{directkernel}, with $\theta-x= \theta^{x+1\downarrow}/\theta ^{x\downarrow} $ in the case of Stirling numbers of the second kind. For eulerian numbers, $\theta=1$. Then,  Pascal's formulas appear as special cases of  the Chapman-Kolmogorov equation $h_{n-1}Q_{n-1}= h_{n}$, and relation \eqref{pt} is just a special case of Proposition \ref{timereversal}.

Here, $Q_{n,x_n,x_{n+1}}\neq 0$ only if $\varepsilon_{n+1}= x_{n+1}-x_{n}$ belongs to $\{0,1\}$, thus $P_{(n,x),(n-1,y)}\neq 0$ only if $\varepsilon= x-y$ belongs to $\{0,1\}$ : in this paper, $P_{(n,x),(n-1,x-\varepsilon)}$ is abridged to $p_{\varepsilon}(n,x)$.

\subsection{Simple random walk}
\begin{proof}[Proof of Proposition \ref{kennedy}]
Here 
\begin{align*}
T(n,k)&={n\choose k},\quad \theta=\frac{p}{1-p},\quad T_n(\theta)=\pa{1+\theta}^n
\\
h_n(k)&= {n\choose k}p^k(1-p)^{n-k},\quad c_n (\theta) =\frac{1}{1+\theta}=1-p.
\end{align*}
Then, for instance,
\begin{align*} 
P_{(n,i),(n-1,i-1)}&=\frac{h_{n-1}(i-1)Q_{n-1,i-1,i}}{h_{n}(i)}
\\
&= \frac{ {n-1\choose i-1}p^{i-1}(1-p)^{n-i}\times p	}{{n\choose i} p^{i}(1-p)^{n-i}}= \frac{ {n-1\choose i-1}}{{n\choose i}}= p_{1}(n,i)
\end{align*}
as expected.
\end{proof}

\subsection{Coupon collector's problem}
Let us recall the famous problem studied by Gauss and Laplace, among others : a collector wants to complete a collection of $N$ different items (denoted $1,...,N$). At each step, he receives a coupon chosen uniformly from $[\![1,N]\!]$.  The average time to complete the collection is known to be $NH_N$, where $$H_N=\sum_{k=1}^N \frac{1}{k}$$ 
is the $N$th harmonic number. If  $X_{n}$ denotes the number of different items in the collection after the $n$th step, then we call the graph of  $t\mapsto X_{\lfloor t \rfloor}$ the \textit{completion curve}.
\begin{center}
    \includegraphics[width=\linewidth]{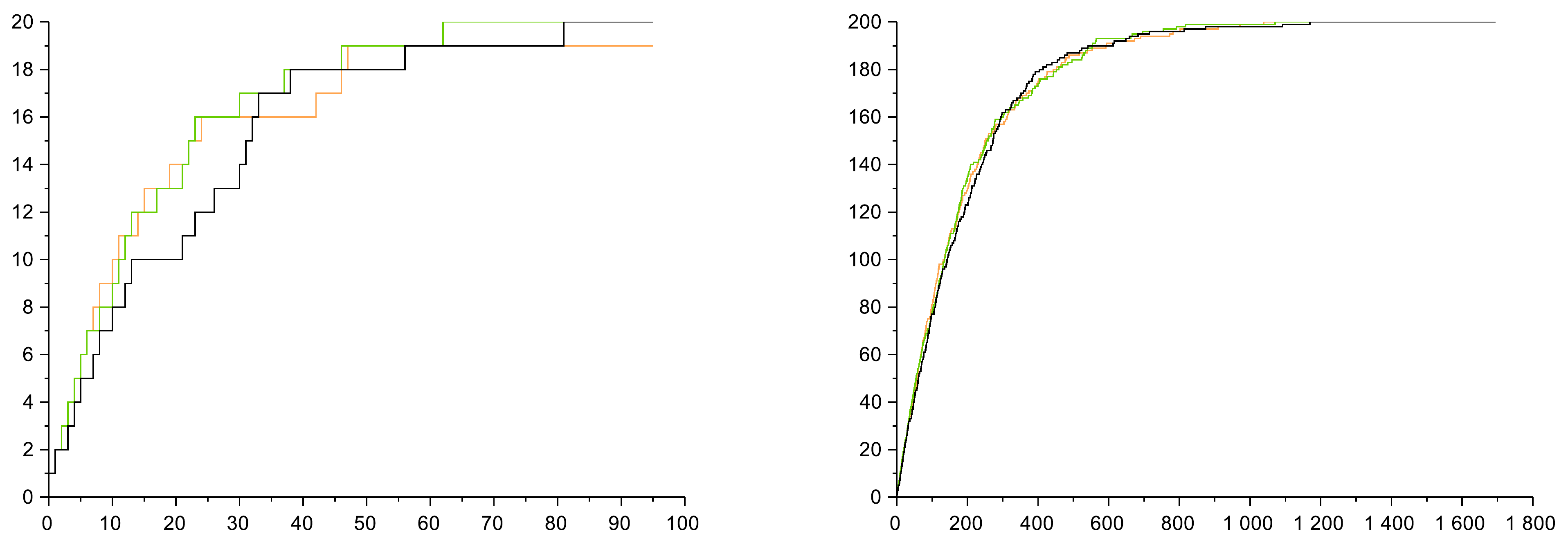}\\[0.5cm]
    \begin{minipage}{\linewidth}
    \centering
    \begin{minipage}{0.45\linewidth}
    \centering
          Figure 9: Three completion curves for a $n=20$ items collection.
      \end{minipage}
      \hspace{0.05\linewidth}
      \begin{minipage}{0.45\linewidth}
      \centering
        Figure 10: Three completion curves for a $n=200$ items collection.
        \end{minipage}
\end{minipage}
\phantom{x}
\end{center}
\begin{proof}[Proof of Proposition \ref{coupon}]
See \cite[Proposition 1]{amri:hal-02164935}, in which the proof is given for $m=N$. It fits with the frame given at Section \ref{sec:timereversal} as follows : set
\begin{align*}
T(n,k)&={n\brace k},\quad \theta=N,
\end{align*}
but consider a variant of $T_n$. Here :
\begin{align*}
T_n(\theta)&=\sum_kT(n,k)\theta^{k\downarrow}=N^n
\\
h_n(k)&= {N\choose k}\,{n\brace k}\,k!\,\frac{1}{N^n}= {n\brace k}\frac{N^{k\downarrow}}{N^n},\quad c_n (\theta) =\frac1{N}.
\end{align*}
Then, for instance,
\begin{align*}
Q_{n,k,k+1}&=a(n+1,k) \frac{\theta^{k+1\downarrow}}{\theta^{k\downarrow}}c_n(\theta)=\frac{N-k}{N}.
\end{align*}
and
\begin{align*} 
p_{1}(n,k)&=P_{(n,k),(n-1,k-1)}=\frac{h_{n-1}(k-1)Q_{n-1,k-1,k}}{h_{n}(k)}
\\
&= \frac{ {n-1\brace k-1}\frac{N^{k-1\downarrow}}{N^{n-1}}\times \frac{N-k+1}{N}	}{{n\brace k}\frac{N^{k\downarrow}}{N^n}} = \frac{ {n-1\brace k-1}	}{{n\brace k}},
\end{align*}
as expected.
\end{proof}

\subsection{Chinese restaurant process}
\label{chinese}

Set $\theta\in (0,+\infty)$. The chinese restaurant process, introduced in 1974 by Antoniak in \cite{MR365969}, is defined as follows : when entering a metaphoric chinese restaurant, the first customer seats at the first table. For $n> 1$, the $n$th customer seats at the $k$th (non-empty) table with probability $\frac{c_{n,k}}{n-1+\theta}$ (where $c_{n,k}$ is the number of customers seated at this table), or at an empty table with probability $\frac{\theta}{n-1+\theta}$. Let $X_n$ denote the number of non-empty tables after the arrival of the $n$th customer. For exemple, let us sample the first  $50$ steps of the process, for $\theta=1$ :\\

\begin{center}
   \includegraphics[width=0.9\linewidth]{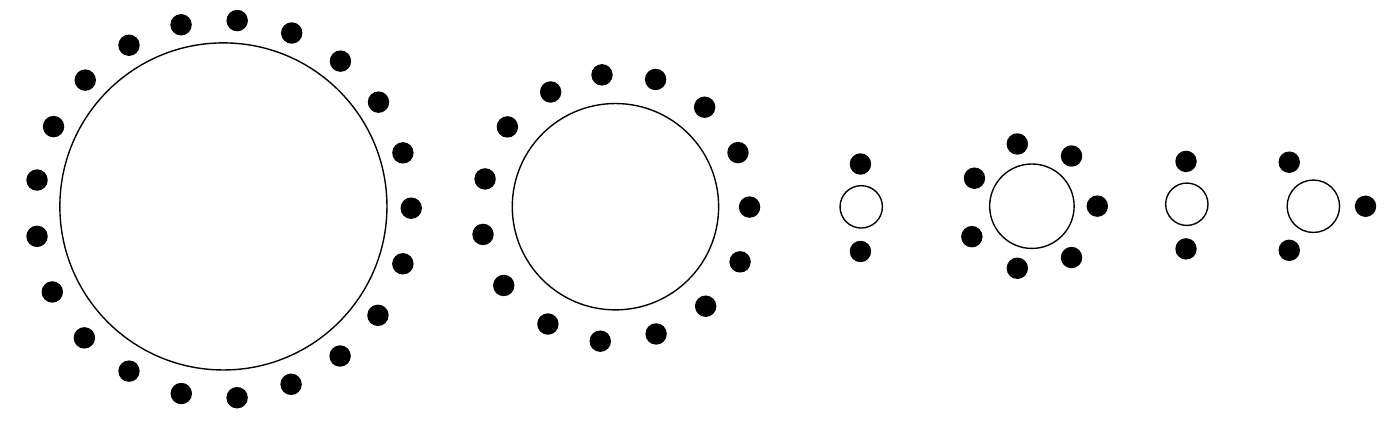}\\[0.3cm]
Figure 11: A realization of the chinese restaurant process (here $X_{50}=6$).\\
\end{center}

\begin{proof}[Proof of Proposition \ref{chine}] In this example, $(Y_i)_{i \ge  1}$ is  a family of independent Bernoulli random variables  with respective parameters $p_i=\theta/(i-1+\theta)$. We have :
\begin{align*}
T(n,k)&={n\brack k},\quad T_n(\theta)=\sum_kT(n,k)\theta^{k}=(\theta)^{\uparrow n},\quad c_n(\theta)=\frac1{\theta+n}.
\end{align*}
Thus the probability distribution of $X_n$ is given, for $n\ge 1$, by:
\begin{align*}
h_n(\ell)&=\mathbb P(X_n=\ell)=\frac{\theta^\ell}{(\theta)^{\uparrow n}}{n\brack \ell}\ \ind_{1\le \ell\le n},
\end{align*}
see \cite[Section 3.1.3]{MR2245368}. For instance,
\begin{align*}
Q_{n,k,k+1}&=\frac{\theta}{n+\theta}=c_n(\theta)\ a(n+1,k)\ \theta .
\end{align*}
and
\begin{align*} 
p_{1}(n,k)&=P_{(n,k),(n-1,k-1)}=\frac{h_{n-1}(k-1)Q_{n-1,k-1,k}}{h_{n}(k)}
\\
&= \frac{ \frac{\theta^{k-1}}{(\theta)^{n-1\uparrow }}{n-1\brack k-1}\times \frac{\theta}{n-1+\theta}	}{{n\brack k}\frac{\theta^{k}}{(\theta)^{n\uparrow}}} = \frac{ {n-1\brack k-1}	}{{n\brack k}},
\end{align*}
as expected.                                                                                              
\end{proof}

\subsection{One-dimensional Internal Diffusion Limited Aggregation process}
\thispagestyle{empty}
Diaconis and Fulton \cite{MR1218674}  introduced the internal Diffusion Limited Aggregation process (iDLA). Lawler, Bramson and Griffeath \cite{zbMATH00120275} coined the terminology \emph{iDLA}, and obtained an asymptotic shape behaviour. In the iDLA process, an aggregate of particles on $\mathbb Z^d$  is built as follows: 
\begin{enumerate}
    \item the first particle settles at the origin;
    \item the next particles perform a symmetric random walk on $\mathbb Z^d$, starting from the origin, and  settle at the first empty site they encounter.
\end{enumerate}

\begin{figure}[H]
              \includegraphics[width=\linewidth]{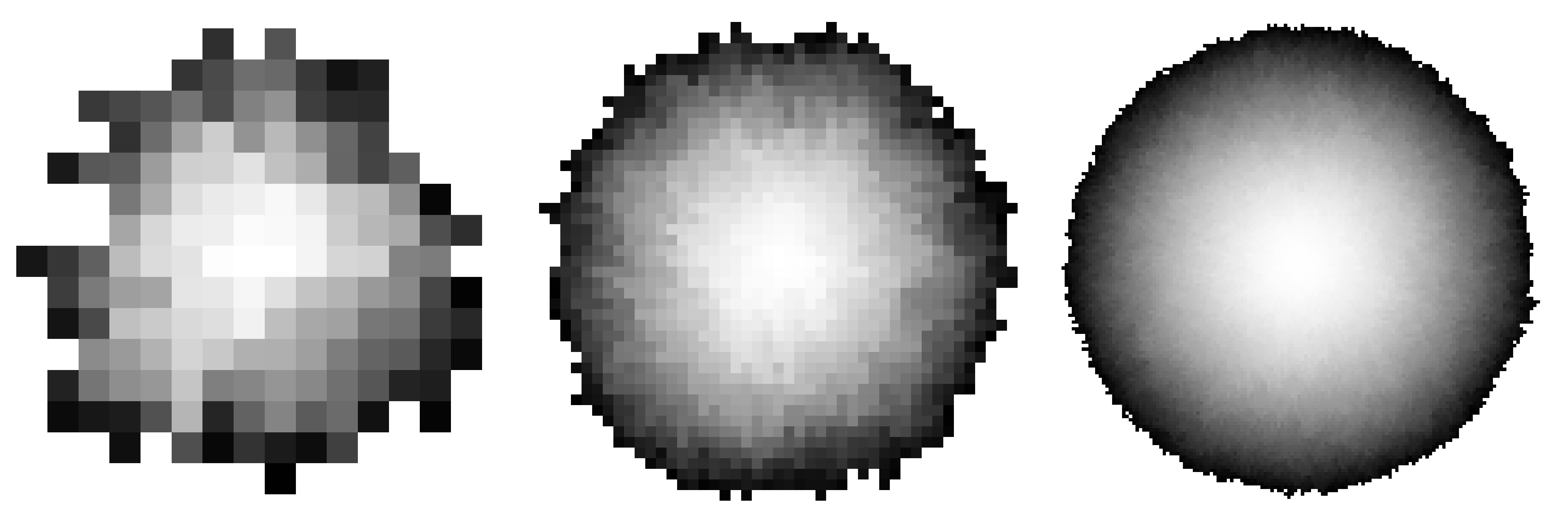}\\
              
              Figure 12: Normalized iDLA aggregates with 150, 1,500 and 15,000 particles on $\mathbb Z^2$.
\end{figure}

When $d=1$, let $X_{n}$ denote the number of particles settled to the right of the origin after the  $n$th step. Then, according to  \cite{zbMATH07227008}, the process $(X_{n})_n$ is an inhomogeneous Markov chain with the same distribution as the sequence of number of descents of the sequence of random permutations defined previously.  Both processes have the  one-dimensional distribution below
$$\mathbb P(X_n=k)=h_n(k)=\frac{\euler{n}{k}}{n!} \ind_{(n,k)\in S}.$$

In the case of the one-dimensional iDLA we can stack successive aggregates upon one another to form a space-time diagram. As with Figure 12, the longer it took to visit a cell, the darker we color it.

\begin{center}
    \begin{figure}[H]
    \centering
              \includegraphics[width=0.55\linewidth]{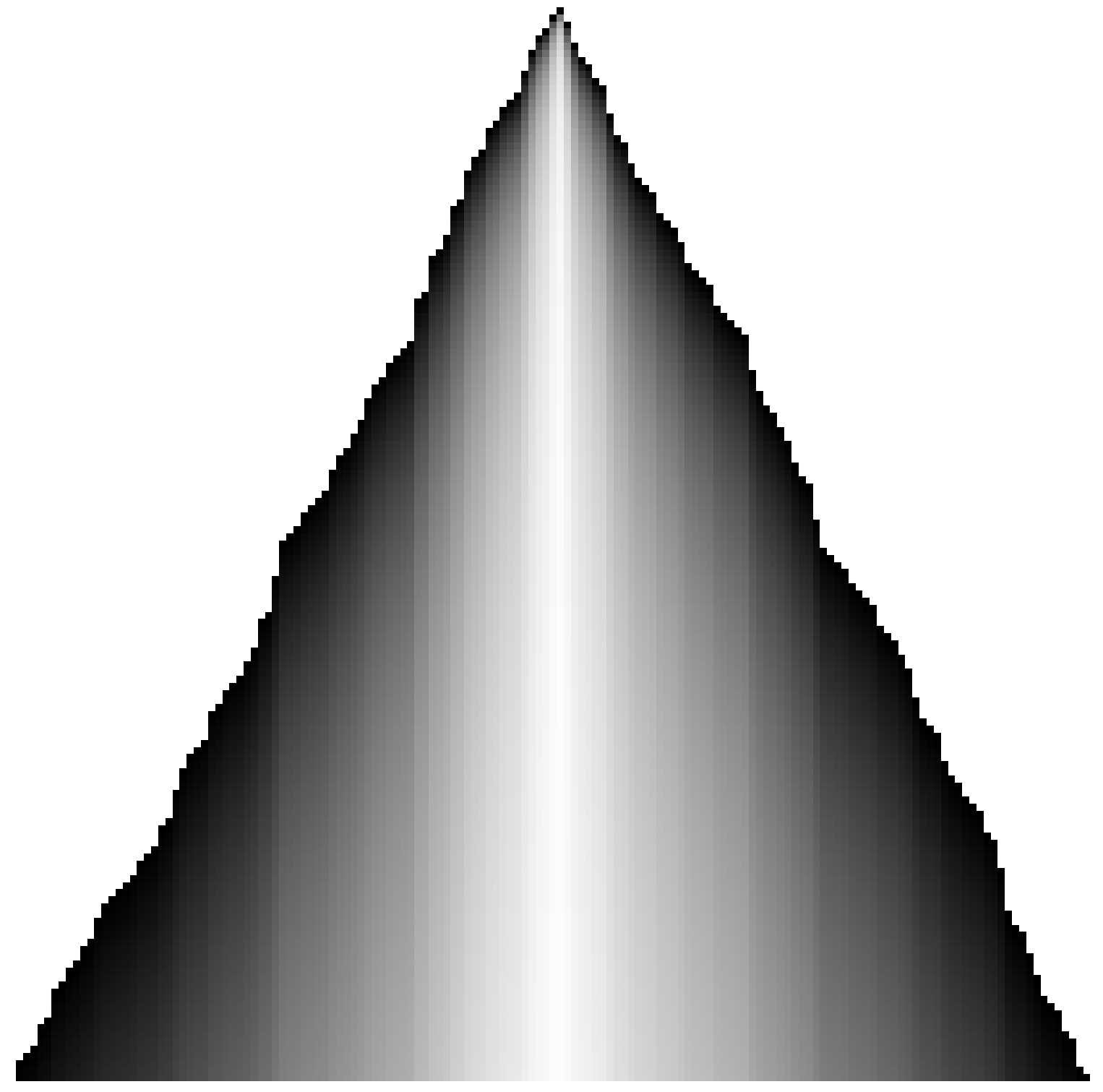}\\[0.5cm]
              Figure 13: Space-time diagram of\\ a one-dimensional iDLA.\\[-1cm]
    \end{figure}
\end{center}

\begin{proof}[Proof of Proposition \ref{idla}] In this example, we have :
\begin{align*}
T(n,k)&=\euler{n}{k},\quad T_n(1)=\sum_kT(n,k)=n!,\quad c_n(1)=\frac1{n}.
\end{align*}
Thus the probability distribution of $X_n$ is given, for $n\ge 1$, by:
\begin{align*}
h_n(\ell)&=\mathbb P(X_n=\ell)=\frac{1}{n!}\ \euler{n}{\ell}\ \ind_{1\le \ell\le n},
\end{align*}
see \cite[Section 3.1.3]{MR2245368}. For instance,
\begin{align*}
Q_{n,k,k+1}&=\frac{n-k}{n+1}=c_n(1)\ a(n+1,k).
\end{align*}
and
\begin{align*} 
p_{1}(n,k)&=P_{(n,k),(n-1,k-1)}=\frac{h_{n-1}(k-1)Q_{n-1,k-1,k}}{h_{n}(k)}
\\
&= \frac{ \frac{1}{n-1!}\euler{n-1}{k-1}\times\frac{n-k}{n}}{\euler{n}{k}\frac{1}{n!}} = \frac{ \euler{n-1}{k-1}(n-k)}{\euler{n}{k}},
\end{align*}
as expected.                                                                                              
\end{proof}

\thispagestyle{empty}

\section{The limit vector field : proof of Theorem \ref{slope}}
\label{powerform}
One can see the set $v$ of average jumps $v(n,k)$, defined, for $(n,k)\in S$, by
\begin{align*}
v(n,k)&=p_{0}(n,k)\times(-1,0)+p_{1}(n,k)\times(-1,-1)
\\
&=\pa{-1,-p_{1}(n,k)},
\end{align*}
as a kind of discrete vector field $v$ on $S$, with  slope  $p_{1}(n,k)$ at point $(n,k)$. As a consequence, the convergence of the sample paths of the time-reversed Markov chains of Section \ref{examplesMC} (see Theorem \ref{samplepath}) requires a precise asymptotic analysis of
\begin{align*}
p_{1}(n,k)&=\frac{a(n,k)T(n-1,k-1)}{T(n,k)},
\end{align*}
and thus, of $T(n,k)$. Consider the generating functions  $V_{k}$ and $H_{n}$ defined by
\begin{align*}
V_{k}(z)=\sum_{n=k}^{+\infty} \frac1{f_{n}} T(n,k) z^{n}, \quad H_{n}(w)&=\sum_{k=0}^{n} T(n,k) w^{k}, 
\end{align*}
respectively.  Here $f_n$ is either $1$ (for  Pascal's, resp. Euler's, triangle) or $n!$, for the 2 Stirling's triangles : for the enumeration of sets of labelled structures, such as  e.g. subsets or cycles, as for the 2 Stirling's triangles, cf. \cite[Part A]{MR2483235}, the factor $n!$ is due to the use of EGFs, and since we consider \emph{sets}, not sequences, of $k$ objects, i.e. \emph{unordered collections}, the  generating function $V_{k}$ contains a factor $1/k!$. In the first three cases, $V_{k}$ exhibits a factorisation $A\times B^{k}$ suitable for the saddle-point method, while, for eulerian numbers, $H_{n}$ is approximately of the form $B^{n}$, allowing the use of large deviations methods. 

Due to these factorisations, the limit vector field depends only on the slope $y/x$, and the function $\varphi$ depends on $B$ alone, in the first 3 cases through the saddle-point equation 
\begin{equation}
\label{spe}
\frac{B^{\prime}\pa{\impl}}{B\pa{\impl}}=\frac{1+\lambda}\impl,
\end{equation}
obtained by optimisation of the function $x\rightarrow\dfrac{B(x)}{x^{1+\lambda}}$ on $(0,+\infty)$, and, for eulerian numbers, through the Legendre transformation of $\ln B$, leading to the equation :
\begin{equation}
\label{lde}
\frac{B^{\prime}\pa{\impl}}{B\pa{\impl}}=\frac1{1+\lambda}.
\end{equation}

\begin{proof}[Proof of Theorem \ref{slope}.]
But for eulerian numbers, let $\impl(\lambda)$ be defined implicitly by \eqref{spe}, i.e. let $\impl(.)$ be the inverse function of :
$$x\longrightarrow \frac{xB^{\prime}\pa{x}}{B\pa{x}}-1.$$
The eulerian case is similar, but uses large deviations rather than saddle-point methods, and will be handled separately. In the remaining  3 cases, recall that $a(n,k)=1$, and  set : 
$$1+\lambda=\frac{n}{k},\quad \impl=\impl(\lambda),\quad 1+\tilde\lambda=\frac{n-1}{k-1},\quad \tilde\impl=\impl(\tilde\lambda).$$
For these 3 cases, the saddle-point method, see \cite[Part B, Chap. VIII]{MR2483235}, leads to 
\begin{align}
\label{homothetie}
T(n,k)\sim \frac{f_{n}}{f_{k}}\ \left(\frac{B\pa{\impl}}{\impl  ^{1+\lambda}}\right)^{k }\ g(n,k),
\end{align}
in which $g(.,.)$ is some factor such that $g(n,k)\sim g(n-1,k-1)$.  The invariance by homothetie of the field lines results from the factorisation $V_{k}=A\times B^{k}$ and  from the Cauchy formula, that leads to the key role of $\lambda$ in the asymptotic behaviour \eqref{homothetie}, and is thus a consequence of the decomposability of the underlying combinatorial structures. 

The factor $f_{n}/f_{k}$ matters only for the 2 Stirling's triangles. As a consequence, for the two Stirling triangles, we have
\begin{align*}
p_{1}(n,k)&\sim\frac{a(n,k)\ k }{n}\ \frac{\tilde\impl  ^{1+\tilde\lambda}}{B\pa{\tilde\impl  }}\left(\frac{B\pa{\tilde\impl  }}{\tilde\impl  ^{1+\tilde\lambda}}\frac{\impl  ^{1+\lambda}}{B\pa{\impl  }}\right)^{k}
\\
&\sim\frac{1}{1+\lambda}\ \frac{\impl  ^{1+\lambda}}{B\pa{\impl  }}\ \left(\frac{\tilde\impl  ^{1+\lambda}}{\tilde\impl  ^{1+\tilde\lambda}}\right)^{k}\left(\frac{B\pa{\tilde\impl  }}{\tilde\impl  ^{1+\lambda}}\frac{\impl  ^{1+\lambda}}{B\pa{\impl  }}\right)^{k}
\\
&\sim\frac{1}{1+\lambda}\ \frac{\impl  ^{1+\lambda}}{B\pa{\impl  }}\ \impl  ^{(\lambda-\tilde\lambda)k},
\end{align*}
the last step due to
\begin{align}
\label{cancelatn}
\lim_{k}\ k\ \ln\left(\frac{B\pa{\tilde\impl  }}{\tilde\impl  ^{1+\lambda}}\frac{\impl  ^{1+\lambda}}{B\pa{\impl  }}\right)&=0.
\end{align} 
Actually, since $\impl$ is solution of the saddle-point equation, the derivative of $$x\rightarrow\ln\left(\frac{B\pa{x}}{x^{1+\lambda}}\right)$$ vanishes at $\impl$, thus
$$\ln\left(\frac{B\pa{\tilde\impl  }}{\tilde\impl  ^{1+\lambda}}\frac{\impl  ^{1+\lambda}}{B\pa{\impl  }}\right)=o\left(\tilde\impl-\impl\right)=o\left(\lambda-\tilde\lambda\right),$$
but
$$\lambda-\tilde\lambda=\frac{n}{k}-\frac{n-1}{k-1}\sim\frac{-\lambda}{k},$$
entailing \eqref{cancelatn}. Thus
\begin{align*}
p_{1}(n,k)&\sim\frac{1}{1+\lambda}\ \frac{\impl  }{B\pa{\impl  }}.
\end{align*}
Finally, for Stirling's triangles, the saddle-point equation \eqref{spe} gives
\begin{align*}
p_{1}(n,k)&\sim\frac{1}{B^{\prime}\pa{\impl  }},
\end{align*}
For Pascal's triangle,   ${n\choose k}$ enumerates words with $n$ letters, $k$ among them being $\texttt{a}$'s and the $n-k$ others being $\texttt{b}$'s, thus Pascal's triangle enumerates \emph{sequences} (not sets) of \emph{unlabelled} objects\footnote{A word with $n$ letters, $k$ among them being $\texttt{a}$'s and the $n-k$ others being $\texttt{b}$'s, can be seen as a sequence of $k$ words of the form $\texttt{b}^{m}\texttt{a}$ followed by a word of the form $\texttt{b}^{m}$.}, for which one usually uses OGFs. As a consequence, $f_n=1$, and, compared with the previous computation, we are rid of the factor $n!/k!$ in $T(n,k)$, and of the factor $k/n=1/(1+\lambda)$ in $p_{1}(n,k)$, thus we obtain
\begin{align*}
p_{1}(n,k)&\sim \ \frac{\impl  }{B\pa{\impl  }}.
\end{align*}
Before we turn to the case of eulerian numbers, let us derive $\varphi$ for each of the 3 first cases :
\begin{itemize} 
\item
\emph{Pascal's triangle :} 
\begin{align*}
V_{k,1}(z)&=\sum_{n\ge k}{n\choose k}z^{n}=\ \dfrac{1}{1-z}\ \pa{\dfrac{z}{1-z}}^{k},
\\
B_{1}(z)&=\dfrac{z}{1-z},
\\
p_{1}(n,k)&\sim \frac{\impl  }{B_{1}\pa{\impl  }}=1-\impl.
\end{align*}
Here \eqref{spe} can be written
$$\frac1{1-\impl}=1+\lambda,$$
thus $\varphi_{1}(k/n)=\frac1{1+\lambda}=k/n$, that is :
$$p_{1}(n,k)\sim\frac kn,$$
which is not a surprise, since it is well known that, actually, $p_{1}(n,k)=\frac kn$.
\item \emph{Stirling numbers of the second kind}

Here :
\begin{align*}
V_{k,2}(z)&=\sum_{n\ge k}{n\brace k}\dfrac{z^{n}}{n!}=\ \dfrac{1}{k !}\ \pa{e^{z}-1}^{k},
\\
B_{2}(z)&=e^{z}-1,
\\
p_{1}(n,k)&\sim \frac{1}{B^{\prime}_{2}\pa{\impl  }}=e^{-\impl},
\end{align*}
and \eqref{spe} can be written
\begin{align}
	\label{poissonpos}
\frac{\impl}{1-e^{-\impl}}=1+\lambda, \end{align}
see \cite{amri:hal-02164935}. Thus
$$\varphi_{2}(k/n)=e^{-\impl\pa{\tfrac nk-1}}.$$
Note that, according to Good \cite{MR0120204} and others, $\impl$ is a smooth concave function of $\lambda>0$, with positive values. Note also that \eqref{poissonpos} is the equation to be solved when one wants to tune the parameter $\zeta$ of a Poisson random variable \emph{conditioned to be positive} in order to obtain the expectation $1+\lambda$.

\item \emph{Stirling numbers of the first kind (unsigned)} 
\begin{align*}
V_{k,3}(z)&=\sum_{n\ge k}{n\brack k}\dfrac{z^{n}}{n!}=\ \dfrac{1}{k !}\ \pa{-\ln(1-z)}^{k},
\\
B_{3}(z)&=-\ln(1-z),
\\
p_{1}(n,k)&\sim \frac{1}{B^{\prime}_{3}\pa{\impl  }}=1-\impl.
\end{align*}
Here \eqref{spe} can be written
\begin{align}
\label{fisherlog}
\frac{\impl}{\pa{\impl-1}\ln(1-\impl)}=1+\lambda,
\end{align}
which defines $\impl$ as smooth concave function of $\lambda>0$, with values in $(0,1)$.
Thus
\begin{align}
\label{fisherlog2}
\varphi_{3}(k/n)=1-\impl\pa{\tfrac nk-1}.
\end{align}
Note that \eqref{fisherlog} is the equation to be solved when one wants to tune the parameter $\zeta$ of a logarithmic probability distribution in order to obtain the expectation $1+\lambda$.
\end{itemize}

For \emph{eulerian numbers}, though the computation of $\varphi_{4}$ has a similar flavour, it presents some notable differences. In order to sum up the asymptotic analysis of eulerian numbers, set, as done in \cite{MR375433} : $$t=\frac{k}n=\frac1{1+\lambda}.$$
In \cite[page 97]{MR375433}, the  main tool is the approximation of $H_{n}(e^{s})$, the Laplace transform of $h_{n}$, by $$B(s)^{n+1}=r(s)^{-n-1}=\pa{\frac{e^{s}-1}s}^{n+1}.$$ In other terms, the key point in  \cite{MR375433} is that $h_{n}$ is approximately the distribution of the sum of $n+1$ i.i.d. uniform random variables, with Laplace transform $B(s)$. This is reminiscent of Tanny's representation of eulerian numbers (cf. \cite{zbMATH03445253}) :
\begin{align}
\label{Tanny}
h_{n}\left(k\right)= \dfrac{\euler{n}{k}}{n!}
&=\mathbb P\left(\lfloor U_{1}+U_{2}+\dots+U_{n}\rfloor=k\right).
\end{align}
Bender  obtains the following asymptotic formula for $\euler{n}{k}$ when $(n,k)$ goes to infinity
$$\frac{\euler{n}{k}}{n!}\sim \pa{B(\impl)e^{-\impl t}}^{n}\ g(n,k)$$
in which $g(.,.)$ is some factor such that $g(n,k)\sim g(n-1,k-1)$, and in which
  $\impl$ is the only real number such that 
\begin{align}
\nonumber 
\frac1{1+\lambda}=t&=\frac{\partial}{\partial \impl}\ln\pa{\frac{e^{\impl}-1}{\impl}}
\\&=\frac{e^\impl}{e^\impl -1}-\frac{1}{\impl}.
\end{align}
One recognize in $\impl(.)$ the derivative of the Legendre-Fenchel transformation of the cumulant-generating function of the uniform distribution, i.e.
the unique solution of
\begin{align}
\label{spe2}
\frac{\partial}{\partial\impl}\ln\pa{B(\impl)e^{-\impl t}}&= \frac{B^{\prime}(\impl)}{B(\impl)}-t =0.
\end{align}

As a consequence, for eulerian numbers, we have
\begin{align*}
p_{1}(n,k)&\sim\frac{a(n,k)}{n}\ \pa{\frac{B(\tilde\impl)^{n-1}e^{-\tilde\impl (k-1)}}{B(\impl)^{n}e^{-\impl  k}}}
\\
&\sim\frac{a(n,k)}{n}\ \frac{e^{\tilde\impl }}{B(\tilde\impl)}\pa{\frac{B(\tilde\impl)^{n}e^{-\tilde\impl k}}{B(\impl)^{n}e^{-\impl  k}}}
\\
&\sim\frac{(1-t)e^{\impl }}{B(\impl)}\pa{\frac{B(\tilde\impl)e^{-\tilde\impl  t}}{B(\impl)e^{-\impl  t}}}^{n}
\\
&\sim\frac{(1-t)e^{\impl }}{B(\impl)}
\end{align*}
the last step due to
\begin{align}
\label{cancelatn2}
\lim_{n}\ n\ \ln\left(\frac{B(\tilde\impl)e^{-\tilde\impl  t}}{B(\impl)e^{-\impl  t}}\right)&=0.
\end{align} 
Actually, since $\impl$ is solution of \eqref{spe2}, the derivative of $$x\rightarrow\ln\left(B(x)e^{-x  t}\right)$$ vanishes at $\impl$, thus
$$\ln\left(\frac{B(\tilde\impl)e^{-\tilde\impl  t}}{B(\impl)e^{-\impl  t}}\right)=o\left(\tilde\impl-\impl\right)=o\left(t-\tilde t\right),$$
but
$$t-\tilde t=\frac{k}{n}-\frac{k-1}{n-1}\sim\frac{1-t}{n},$$
entailing \eqref{cancelatn2}. Thus
\begin{align*}
p_{1}(n,k)&\sim\frac{(1-t)\impl e^{\impl}}{e^{\impl}-1}=\frac{\lambda\impl }{(1+\lambda)(1-e^{-\impl})}=\phi_{4}(\lambda).
\end{align*}
Note that :
$$\impl(1-t)=-\impl(t),\quad\varphi_4(1-t)= 1-\varphi_4(t)= \frac{t \impl}{e^{\impl}-1},$$
as expected from the relation $\euler{n}{k}=\euler{n}{n-k-1}$.
\end{proof}

%
%
%


%
%
%
%

\section{Sample path convergence}
\label{saddle-point}
This section is devoted to the proof of Theorem \ref{samplepath} for the first three triangles.   For the sake of completeness, we first give the well known proof of Theorem \ref{samplepath} for Pascal's triangle.   In the case of Stirling numbers of the second kind, a weaker form of Theorem \ref{samplepath} was obtained in \cite{amri:hal-02164935} at the price of a tedious proof using Wormald method and saddle-point asymptotics. For Euler's triangle,  we think that the same property holds true, but the proof is still a work in progress. In the case of Stirling triangles of both kind, we believe that the proofs given in the next sections are new.

\subsection{Proof of Theorem \ref{samplepath} : Pascal's triangle.} Consider   two probability distributions for the processes $(W,X,Y)$ defined at section \ref{examplesMC}. Under $\mathbb{P}_{(m,\ell)}$, $W$ is a Markov chain starting from $(m,\ell)$, with transition probabilities $(p_{\varepsilon}(n,k))_{\varepsilon,n,k}$ related to Pascal's triangle, and the processes $(X,Y)$ are distributed accordingly. On the other hand, under $\mathbb{P}_{p}$, $Y=(Y_k)_{1\leqslant k \leqslant m}$ is a sequence of i.i.d Bernoulli random variables with parameter $p$, and the processes $(X,W)$ are distributed accordingly. According to Proposition \ref{kennedy}, for any $p\in(0,1)$, and any set $B$ in the relevant state space,
\begin{align}
\label{sufficient}
\mathbb{P}_{(m,\ell)} \pa{(W,X,Y) \in B}&= \mathbb{P}_{p} \pa{\{(W,X,Y) \in B\} \cap \{X_m=\ell\}}/ \mathbb{P}_{p} \pa{X_m=\ell}
\\
\nonumber&= \mathbb{P}_{p} \pa{\{(W,X,Y) \in B\} \cap \{X_m=\ell\}}/ \mathbb{P}_{p} \pa{W_0=(m,\ell)}.
\end{align}
By Hoeffding's inequality, for all $t>0$, and all $n\in[\![1,m]\!]$,
\begin{align*}\mathbb{P}_{p}(|X_n-np|\geqslant t)&\leqslant 2\exp\left(-\frac{2t^2}{n}\right)\\
&\leqslant 2\exp\left(-\frac{2t^2}{m}\right).
\end{align*}
In particular, for any $\eta\in (0,1/2)$ and for $t=m^{1-\eta}/\sqrt{2}$,
$$\mathbb{P}_{p}(|X_n-np|\geqslant m^{1-\eta}/\sqrt{2})\leqslant 2\exp\left(-m^{1-2\eta}\right).$$
Thus
\begin{align*}
\mathbb{P}_{p}(\exists n\in [\![0,m]\!]\text{ s.t. }|X_n-np|\geqslant m^{1-\eta} /\sqrt{2})&\leqslant \sum_{n=1}^m\mathbb{P}_{p}(|X_n-np|\geqslant m^{1-\eta} /\sqrt{2})
\\
&\leqslant 2m\exp\left(-m^{1-2\eta}\right).
\end{align*}
Set
\begin{align*}
A_m&=\ac{\exists n\in[\![0,m]\!] \text{~s.t.~}\|W_n-(m-n,(m-n)p)\|_1\geqslant m^{1-\eta} /\sqrt{2}}.
\end{align*}
Then $\mathbb{P}_{p}({A}_m \cap \{X_m=\ell\})\leqslant 2m\exp\left(-m^{1-2\eta}\right)$ and, according to \eqref{sufficient},
$$\mathbb{P}_{(m,\ell)}(A_m)=\frac{ \mathbb{P}_{p}({A}_m\cap \{X_m=\ell\})}{\mathbb{P}_{p}(X_m=\ell)}\leqslant \frac{2m\exp\left(-m^{1-2\eta}\right)}{ \mathbb{P}_{p}(X_m=\ell)}$$
This is true for any $p\in(0,1)$, thus for $p=\ell/m$ too, but, using Stirling formula, one finds
\begin{align*}
    \mathbb{P}_{\ell/m}(X_m=\ell)&=\binom{m}{\ell} \pa{\frac{\ell}{m}}^m \pa{\frac{m-\ell}{m}} ^{m-\ell}
    \\
    &\sim \frac{1}{\sqrt{2\pi p(1-p)}}\ \frac{1}{\sqrt{m}}.
\end{align*}
Finally, $\mathbb{P}_{(m,\ell)}(A_m)=\mathcal{O}(m^{3/2}e^{-m^{1-2\eta}})$ and vanishes for $\eta\in (0,1/2)$. For Pascal triangle, recall that $\gamma_{m,\ell}(t)= \ell t/m$, thus 
$$A_m=\ac{\sup\ac{\abs{w_{m}(t)-\gamma_{m,\ell}(t)},  mt\in[\![0,m]\!] }\ge m^{-\eta} /\sqrt{2}},$$
and
$$0\le \sup_{t\in[0,1]}\ac{\abs{w_{m}(t)-\gamma_{m,\ell}(t)}}-\sup_{mt\in[\![0,m]\!]}\ac{\abs{w_{m}(t)-\gamma_{m,\ell}(t)}}\le \frac\ell{m^2} \le \frac1{m},$$
so that, for $m$ large enough,
$$\ac{\sup_{t\in[0,1] }\abs{w_{m}(t)-\gamma_{m,\ell}(t)}\ge m^{-\eta}}\subset A_m,$$
and, as expected,
$$\lim_{m}\mathbb P_{(m,\ell)}\left(\sup_{t\in[0,1] }\abs{w_{m}(t)-\gamma_{m,\ell}(t)}\ge m^{-\eta}\right)=0.$$ 

\subsection{Proof of Theorem \ref{samplepath} : Stirling numbers of the first kind.} 

Consider   two probability distributions for the processes $(W,X,Y)$ defined at section \ref{examplesMC}. Under $\mathbb{P}_{(m,\ell)}$, $W$ is a Markov chain starting from $(m,\ell)$, with transition probabilities $(p_{\varepsilon}(n,k))_{\varepsilon,n,k}$ related to Stirling numbers \emph{of the first kind}, and the processes $(X,Y)$ are distributed accordingly. On the other hand, under $\mathbb{P}_{\theta}$, $(Y_i)_{i \ge  1}$ is  a family of independent Bernoulli random variables  with respective parameters $p_i=\theta/(i-1+\theta)$, and the processes $(W,X,Y)$ are distributed accordingly :  for instance, $X_n$ can be seen as the number of non-empty tables after the arrival of the $n$th customer, as in Section \ref{chinese}. As before, according to Proposition \ref{chine}, for any $\theta>0$, and any set $B$ in the relevant state space,
\begin{align}
\label{sufficientchine}
\mathbb{P}_{(m,\ell)} \pa{(W,X,Y) \in B}&= \mathbb{P}_{\theta} \pa{\{(W,X,Y) \in B\} \cap \{X_m=\ell\}}/ \mathbb{P}_{\theta} \pa{X_m=\ell}
\\
\nonumber&= \mathbb{P}_{\theta} \pa{\{(W,X,Y) \in B\} \cap \{X_m=\ell\}}/ \mathbb{P}_{\theta} \pa{W_0=(m,\ell)}.
\end{align}
Also, as in 	\eqref{fisherlog}, recall that
$$\frac{\impl}{\pa{\impl-1}\ln(1-\impl)}=\frac{m}\ell=1+\lambda,$$
and that, for $t\ge 0$,
\begin{equation*}
\gamma_{m,\ell}(t)=\frac{1-\impl}{\impl}\ln\pa{\frac{1-\impl+t\,\impl}{1-\impl}}.
\end{equation*}
Let $\mu=\mu_{\theta,n}$ denote the expectation of $X_n$ under $\mathbb{P}_{\theta}$, that is
$$\mu=\mu_{\theta,n}=\sum_{k=1}^n\frac{\theta}{k-1+\theta},$$
and note that, for the choice $\theta_m=m(1-\impl)/\impl$,
\begin{align}
	\label{harmochine}
	\abs{\mu_{\theta_m,n}- m\,\gamma_{m,\ell}(n/m)}&\le \frac{n\impl}{m(1-\impl)} \le \frac{\impl}{1-\impl}.
\end{align}
According to Hoeffding's inequality, for all $t>0$, and all $n\in[\![1,m]\!]$,
\begin{align*}\mathbb{P}_{\theta}(|X_n-\mu_{\theta,n}|\geqslant t)&\leqslant 2\exp\left(-2t^2/n\right)\\
&\leqslant 2\exp\left(-2t^2/m\right).
\end{align*}
In particular, for all $\eta\in (0,1/2)$ and for $t=m^{1-\eta}/\sqrt{2}$,
$$\mathbb{P}_{\theta}(|X_n-\mu_{\theta,n}|\geqslant m^{1-\eta} /\sqrt{2})\leqslant 2\exp\left(-m^{1-2\eta}\right).$$
Set
\begin{align*}
A_m&=\ac{\exists n\in[\![0,m]\!] \text{~s.t.~}|X_n-\mu_{\theta,n} |\geqslant m^{1-\eta} /\sqrt{2}}.
\end{align*}
Thus
\begin{align*}
\mathbb{P}_{\theta}(A_m)&\leqslant \sum_{n=1}^m\mathbb{P}_{\theta}(|X_n-\mu_{\theta,n} |\geqslant m^{1-\eta} /\sqrt{2})
\\
&\leqslant 2m\exp\left(-m^{1-2\eta}\right)
\end{align*}
Then $\mathbb{P}_{\theta}({A}_m \cap \{X_m=\ell\})\leqslant 2m\exp\left(-2m^{1-2\eta}\right)$ and, according to \eqref{sufficientchine},
$$\mathbb{P}_{(m,\ell)}(A_m)=\frac{ \mathbb{P}_{\theta}({A}_m\cap \{X_m=\ell\})}{\mathbb{P}_{\theta}(X_m=\ell)}\leqslant \frac{2m\exp\left(-m^{1-2\eta}\right)}{ \mathbb{P}_{\theta}(X_m=\ell)}$$
This is true for any $\theta>0$, thus for $\theta_m=(1-\impl)m/\impl$ too, but, using relation (13) in \cite{MR0120204}, one finds
\begin{align*}
    \mathbb{P}_{\theta_m}(X_m=\ell)&= \frac{\theta_m^\ell}{(\theta_m)^{\uparrow m}}{m\brack \ell}\ \ind_{1\le \ell\le m},
    \\
    &\sim \frac{1}{\sqrt{m}}\ \sqrt{\frac{\ln(1-\impl)}{2\pi(1+\lambda)(\impl+\ln(1-\impl))}}.
\end{align*}
Finally, $\mathbb{P}_{(m,\ell)}(A_m)=\mathcal{O}(m^{3/2}e^{- m^{1-2\eta}})$ and vanishes for $\eta\in (0,1/2)$. But,  for $m$ large enough, 
$$B_m=\ac{\sup_{t\in[0,1] }\abs{w_{m}(t)-\gamma_{m,\ell}(t)}\ge m^{-\eta}}\subset A_m,$$
and, as expected,
$$\lim_{m}\mathbb P_{(m,\ell)}\left(B_m\right)=0.$$ 
Actually, due to \eqref{harmochine}, for $0\le n\le m$,
\begin{align*}	\abs{\frac{\mu_{N_0,n}}{m}-\gamma_{m,\ell}(n/m)}&\le \frac{\impl}{m(1-\impl)},
\end{align*}
and 
$$0\le \sup_{t\in[0,1]}\ac{\abs{w_{m}(t)-\gamma_{m,\ell}(t)}}-\sup_{mt\in[\![0,m]\!]}\ac{\abs{w_{m}(t)-\gamma_{m,\ell}(t)}}\le \frac1{m},$$
thus $B_m\subset A_m$  provided that
$$\frac{m^{-\eta}}{\sqrt{2}}+ \frac{\impl}{m(1-\impl)} +\frac{1}m \le m^{-\eta}.$$

\subsection{Proof of Theorem \ref{samplepath} : Stirling numbers of the second kind.} 
\label{stir2th2}

Consider   two probability distributions for the processes $(W,X,Y)$ defined at section \ref{examplesMC}. Under $\mathbb{P}_{(m,\ell)}$, $W$ is a Markov chain starting from $(m,\ell)$, with transition probabilities $(p_{\varepsilon}(n,k))_{\varepsilon,n,k}$ related to Stirling numbers \emph{of the second kind}, and the processes $(X,Y)$ are distributed accordingly. On the other hand, under $\mathbb{P}_{N}$, $X_n$ is the number of different coupons that have been collected after $n$ draws with replacement in a collection of $N$ available coupons, and the processes $(X,Y,W)$ are distributed accordingly. According to Proposition \ref{coupon}, for any $N\ge\ell$, and any set $B$ in the relevant state space,
\begin{align}
\label{sufficientcoupon}
\mathbb{P}_{(m,\ell)} \pa{(W,X,Y) \in B}&= \mathbb{P}_{N} \pa{\{(W,X,Y) \in B\} \cap \{X_m=\ell\}}/ \mathbb{P}_{N} \pa{X_m=\ell}
\\
\nonumber&= \mathbb{P}_{N} \pa{\{(W,X,Y) \in B\} \cap \{X_m=\ell\}}/ \mathbb{P}_{N} \pa{W_0=(m,\ell)}.
\end{align}
Let $\mu$ denote the expectation of $X_n$ under $\mathbb{P}_{N}$, that is
$$\mu=\mu_{N,n}=N\pa{1-\pa{1-\frac{1}{N}}^n},$$
and note that 
\begin{align}
	\label{expo}
	\abs{\mu_{N,n}-N\pa{1-e^{-\frac{n}{N}}}}&\le \frac{n}{2N}.
\end{align}
Also, as in 	\eqref{poissonpos}, set
\begin{align*}
\frac{\impl}{1-e^{-\impl}}=1+\lambda= \frac{m}{\ell}. \end{align*}
According to \cite[Ch. 4, Theorem 4.18]{zbMATH00819814}, by Azuma-Hoeffding's inequality, for all $t>0$, and all $n\in[\![1,m]\!]$,
\begin{align*}\mathbb{P}_{N}(|X_n-\mu_{N,n}|\geqslant t)&\leqslant 2\exp\left(-t^2\ \frac{N-1/2}{N^2-\mu_{N,n}^2}\right)\\
&\leqslant 2\exp\left(-\frac{t^2}{2N}\right).
\end{align*}
In particular, for all $\eta\in (0,1/2)$ and for $t=m^{1-\eta}/\sqrt{2}$,
$$\mathbb{P}_{N}(|X_n-\mu_{N,n}|\geqslant m^{1-\eta} /\sqrt{2})\leqslant 2\exp\left(-m^{2-2\eta}/4N\right).$$
Set
\begin{align*}
A_m&=\ac{\exists n\in[\![0,m]\!] \text{~s.t.~}|X_n-\mu_{N,n} |\geqslant m^{1-\eta} /\sqrt{2}}.
\end{align*}
Thus
\begin{align*}
\mathbb{P}_{N}(A_m)&\leqslant \sum_{n=1}^m\mathbb{P}_{N}(|X_n-\mu_{N,n} |\geqslant m^{1-\eta} /\sqrt{2})
\\
&\leqslant 2m\exp\left(-m^{2-2\eta}/4N\right)
\end{align*}
Then $\mathbb{P}_{N}({A}_m \cap \{X_m=\ell\})\leqslant 2m\exp\left(-m^{2-2\eta}/4N\right)$ and, according to \eqref{sufficientcoupon},
$$\mathbb{P}_{(m,\ell)}(A_m)=\frac{ \mathbb{P}_{N}({A}_m\cap \{X_m=\ell\})}{\mathbb{P}_{N}(X_m=\ell)}\leqslant \frac{2m\exp\left(-m^{2-2\eta}/4N\right)}{ \mathbb{P}_{N}(X_m=\ell)}$$
This is true for any $N\ge\ell$, thus for $N_0=\lceil m/\impl\rceil$ too, but, using relation (3) in \cite{MR0120204}, one finds
\begin{align*}
    \mathbb{P}_{\lceil m/\impl\rceil}(X_m=\ell)&=\frac{N_0!\,N_0^{-m}}{N_0-\ell!}\,{m\brace\ell}
    \\
    &\sim \sqrt{\frac{\impl\ e^{\impl}}{2\pi (\impl-\lambda)m}}.
\end{align*}
Finally, $\mathbb{P}_{(m,\ell)}(A_m)=\mathcal{O}(m^{3/2}e^{-\impl m^{1-2\eta}})$ and vanishes for $\eta\in (0,1/2)$. For Stirling numbers of the second kind, recall that $\gamma_{m,\ell}(t)= \pa{1-e^{-\impl t}}/\impl$, thus 
$$B_m=\ac{\sup\ac{\abs{w_{m}(t)-\gamma_{m,\ell}(t)},  mt\in[\![0,m]\!] }\ge m^{-\eta}}\subset A_m.$$
Actually, due to \eqref{expo}, for $0\le n\le m$,
\begin{align}
\nonumber	\abs{\frac{\mu_{N_0,n}}{m}-\gamma_{m,\ell}(n/m)}&\le \frac{n}{2N_0m}+\abs{\frac{N_0}{m}\pa{1-e^{-\frac{n}{N_0}}}-\gamma_{m,\ell}(n/m)}
	\\
\nonumber	&\le \frac{n}{2N_0m}+\abs{\frac{1-e^{-\tilde\impl n/m}}{\tilde\impl}-\frac{1-e^{-\impl n/m}}{\impl}}
	\\
	\label{lipsch}
	&\le \frac{\tilde\impl}{2m}+\frac{1+ \tilde\impl}m,
\end{align}
in which 
$$\frac{m}{\tilde\impl}=\left\lceil\frac{m}{\impl}\right\rceil =N_0, \quad\text{~thus~} \quad 0\le\impl-\tilde\impl\le\frac{\impl\tilde\impl}m.$$
Thus $B_m\subset A_m$ for $m$ large enough, i.e. provided that
$$\frac{m^{-\eta}}{\sqrt{2}}+ \frac{\tilde\impl}{2m}+\frac{1+ \tilde\impl}m \le m^{-\eta}.$$
Finally 
$$0\le \sup_{t\in[0,1]}\ac{\abs{w_{m}(t)-\gamma_{m,\ell}(t)}}-\sup_{mt\in[\![0,m]\!]}\ac{\abs{w_{m}(t)-\gamma_{m,\ell}(t)}}\le \frac1{m},$$
so that, for $m$ large enough,
\begin{align}
\label{include2}
\ac{\sup_{t\in[0,1] }\abs{w_{m}(t)-\gamma_{m,\ell}(t)}\ge m^{-\eta}}&\subset A_m,	
\end{align}
and, as expected,
$$\lim_{m}\mathbb P_{(m,\ell)}\left(\sup_{t\in[0,1] }\abs{w_{m}(t)-\gamma_{m,\ell}(t)}\ge m^{-\eta}\right)=0.$$ 

\subsection{Application to the enumeration of accessible complete deterministic automata with $k$ letters and $n$ vertices}
Let $a_{k,n}$ denote the number of accessible complete deterministic automata (ACDA) with $k$ letters and $n$ vertices (see \cite{Nicaudth,amri:hal-02164935} for definitions). According to Kor\v{s}unov \cite{MR517814,MR862029}, for any given $k\ge 2$, 
\begin{align}
\label{kor}
a_{k,n}&\sim c_{k}\ {kn+1\brace n} n!,
\end{align}
in which $\impl_2$ is defined by \eqref{ckor2}, and
\begin{align}
\label{ckor}
c_{k}&= 1-k\,e^{-\impl_2(k-1)}.
\end{align}
Following \cite{amri:hal-02164935}, this section gives a probabilistic interpretation of  Kor\v{s}unov's formula, that relies on Theorem \ref{samplepath} for Stirling numbers of the second kind : according to \cite{Nicaudth}, there exists a bijection between the set of ACDA with $k$ letters and $n$ vertices and a subset $\mathcal{A}_{k,n}$ of the set $\Omega_{kn+1,n}$ of surjections from $[\![kn+1]\!]$ to $[\![n]\!]$. Thus \eqref{kor} states that the ratio $\#\mathcal{A}_{k,n}/\#\Omega_{kn+1,n}$ converges to $c_k$ with $n$. But an element of $\Omega_{kn+1,n}$ can be seen as the sample path of a coupon collector process such that the collection of $n$ items is complete at step $kn+1$. As a consequence, in the notations of Section \ref{stir2th2},
\begin{equation}
	\label{interpretation}
	\mathbb{P}_{(kn+1,n)}(\mathcal{A}_{k,n})=  \frac{\#\mathcal{A}_{k,n}}{\#\Omega_{kn+1,n}}=\frac{a_{k,n}}{{kn+1\brace n}n!},
\end{equation}
and Kor\v{s}unov's formula can be rephrased as
\begin{equation}
	\label{interpretation2}
	\lim_n\mathbb{P}_{(kn+1,n)}(\mathcal{A}_{k,n})=  c_k.
\end{equation}
Now, according to \cite{Nicaudth}, $\mathcal{A}_{k,n}$ is the set of elements $\omega\in\Omega_{kn+1,n}$ such that 
\begin{equation*}
	\forall \ell\in[\![0,n-1]\!],\quad X_{\ell k+1}(\omega)\ge \ell+1,
\end{equation*}
or, equivalently,
\begin{equation}
\label{connexion}
	\forall \ell\in[\![0,kn]\!],\quad k\,X_{\ell}(\omega)\ge \ell.
	\end{equation}
Relation \eqref{connexion} is, as usual, required from a breadth first search walk to insure the connexity of the underlying graph. 

We shall now sketch the argument, taken from  \cite{amri:hal-02164935}, which, using Theorem \ref{samplepath}, shows that $\Upsilon_n=\overline{\mathcal{A}_{k,n}}$ satisfies
\begin{equation*}
	\lim_n\mathbb{P}_{(kn+1,n)}(\Upsilon_n)=  1-c_k= k\,e^{-\impl_2(k-1)}.
\end{equation*}
Note that $\lim_n \lambda(kn+1,n)=k-1$, and that the corresponding concave limit field line,
$$\gamma_{m,\ell}(t)= \frac{1-e^{-\impl_2(k-1) t}}{\impl_2(k-1)},$$
crosses the line $y=x/k$ only  at its endpoints $(0,0)$ and $(k,1)$, so that, according to Theorem \ref{samplepath}, but for an exponentially small probability, the sample path $\ac{(\ell, X_\ell), 0\le\ell\le kn+1}$
crosses the line $y=x/k$ only  close to its endpoints. The probability that such a crossing occurs close to $(0,0)$ is very small too, see \cite[Proposition 2]{amri:hal-02164935}. As a consequence, $\mathbb{P}_{(kn+1,n)}(\Upsilon_n)$ has the same asymptotic behaviour than the probability that the sample path $\ac{(\ell, X_\ell), 0\le\ell\le kn+1}$
crosses the line $y=x/k$   close to its endpoint $(kn+1,n)$.  Close to this endpoint, the sample path has approximately the same transition probabilities as a standard random walk with step distribution
\[\pa{1-e^{-\impl_2(k-1)}}\delta_{0}+e^{-\impl_2(k-1)}\delta_{-1}.\] 
The probability of a crossing of the line $y=x/k$ by such a standard random walk is $1-c_k= k\,e^{-\impl_2(k-1)}$, as follows for instance from the Pollaczek-Khinchine formula (cf. Corollary 6.6 of \cite{MR1978607}, or Proposition 3 of \cite{amri:hal-02164935}, in which one can find a proof of Kor\v{s}unov's formula along these lines).  

\section*{Acknowledgements.} The authors are grateful to Antoine Lejay and R\'emi Peyre for fruitful discussions, and to Antoine Lejay for pointing reference \cite{zbMATH03019588}.
\bibliographystyle{amsalpha}
\bibliography{triangle}

\end{document}